\newcommand{\N}{{\mathbb{N}}}
\newcommand{\F}{{\mathbb{F}}}
\newcommand {\ignore}[1]  {}
\newcommand{\eps}{\varepsilon}
\newcommand{\PP}{{\mathcal P}}
\newcommand{\PPdisc}{{\mathcal P^{\mathrm{(disc)}}_{L}}}
\newcommand{\df}{{\, \stackrel{\mathrm{def}}{=}\, }}
\newcommand{\Tor}{\mathbb{T}}
\newcommand{\Convn}{{\mathrm{Conv}_n}}
\newcommand{\R}{{\mathbb{R}}}
\newcommand{\TT}{{\mathbb{T}}}
\newcommand{\Z}{{\mathbb{Z}}}
\newcommand{\Dd}{{\mathcal{D}}}
\newcommand{\Vol}{{\mathrm{vol}}}
\newcommand{\Gr}{{\mathrm{Gr}}}
\newcommand{\Unif}{\mathrm{Uniform}}
\newcommand{\SM}{{\eta}}
\newcommand{\SMfp}{{\eta_{\F_p}}}
\newcommand{\SMDA}{{\Phi}}
\newcommand{\covol}{\mathrm{covol}}
\newcommand{\KK}{{\mathcal{K}}}
\newtheorem{thm}{Theorem}[section]
\newtheorem{lem}[thm]{Lemma}
\newtheorem{prop}[thm]{Proposition}
\newtheorem{cor}[thm]{Corollary}
\newtheorem{remark}[thm]{Remark}
\newtheorem{dfn}[thm]{Definition}
\newif\ifdraft\drafttrue
\title[Density of smooth lattice coverings]{Bounds on the density of smooth lattice coverings}
\author{Or Ordentlich}
\address{School of Computer Science and Engineering, Hebrew University}
\author{Oded Regev}
\address{Courant Institute of Mathematical Sciences, New York University}
\author{Barak Weiss}
\address{School of Mathematical Sciences, Tel Aviv University}
\begin{document}
	\date{\today}
	\maketitle
	
\begin{abstract}
Let $\KK$ be a convex body in $\R^n$, let $L$ be a lattice with unit covolume, and let $\eta>0$. We say that $\KK$ and $L$ form an {\em $\eta$-smooth cover} if each point $x\in\R^n$ is covered by $(1 \pm\eta)\Vol(\KK)$ translates of $\KK$ by $L$. We prove that for any positive $\sigma$ and $\eta$, asymptotically as $n\to \infty,$ for any $\KK$ of volume $n^{3+\sigma}$,  one can find a lattice $L$ for which $\KK, L$ form an $\eta$-smooth cover. Moreover, this property is satisfied with high probability for a lattice chosen randomly, according to the Haar-Siegel measure on the space of lattices. Similar results hold for random construction A lattices, albeit with a worse power law, provided that the ratio between the covering and packing radii of $\Z^n$ with respect to $\KK$ is at most polynomial in $n$.
   Our proofs rely on a recent breakthrough
   of Dhar and Dvir on the discrete Kakeya problem. 
\end{abstract}
	
\section{Introduction}
Let $\Convn$
denote the set of 
bounded convex subsets of  $\R^n$ with nonempty interior. For a lattice $L\subset \mathbb{R}^n$, convex set $\KK\in\Convn$,
and a point $x\in\mathbb{R}^n$ we denote
\begin{equation}
\label{eq: we denote}
N(L, \KK, x) \df  \left|L \cap (\KK+x)\right| = \left| \left\{y\in L \ : \ x\in y-\KK\right\}\right|.
\end{equation}
The expectation of $N(L, \KK,x)$ when $x$ is drawn uniformly from a fundamental domain for $L$ 
is $\Vol(\KK)/\covol(L)$; thus if we draw $x$ uniformly in a ball $B(0,T)$ with respect to some norm,  the expectation of $N(L, \KK,x)$ approaches $\Vol(\KK)/\covol(L)$, in the limit as $T \to \infty$.  Furthermore, we have that $\frac{N(L, \alpha\KK, x)}{\Vol(\alpha\KK)/\covol(L)}$ tends to $1$ as the dilation factor $\alpha$ grows, where the convergence is uniform in $x$. It is therefore natural to ask,  given $\KK\in\Convn$ and a lattice $L\subset\R^n$, whether the fraction $\frac{N(L, \KK, x)}{\Vol(\KK)/\covol(L)}$
 is nearly constant on $\R^n$. To that end we define the following quantity:
\begin{dfn}\label{dfn: smoothness continuous}
	The \emph{covering smoothness} of a lattice $L\subset\R^n$ with respect to a convex body $\KK\in\Convn$ is defined as
	\begin{align*}
	\SM(\KK,L)\df\sup_{x\in\R^n}\left|\frac{N(L,\KK,x)}{\Vol(\KK)/\covol(L)}-1\right|.\nonumber
	\end{align*}
\end{dfn}
Note that $\SM(\KK,L)<1$ immediately implies that $L+\KK=\R^n$. In this case the pair $(L, \KK)$ is said to be {\em a covering}. (The  reverse statement is not true --- there may be points $x$ that are covered an exceptionally large number of times by translates of $\KK$.) 
As in the abstract, if $\eta(\KK, L) < \eta$ we say that the covering given by $\KK$ and $L$ is {\em $\eta$-smooth.} 

Let $\mu_n$ denote the Haar-Siegel measure; that is, the unique probability measure on the space of lattices in $\R^n$ of unit covolume, which is invariant under volume preserving linear transformations. Our main result is that for every $\KK\in\Convn$ whose volume is polynomial in $n$, and for most lattices in $\R^n$, the covering smoothness is small.

\begin{thm}\label{thm: this result}
Let $n>25$, and let $\KK\in\Convn$. Let $\delta,\eps\in(0,1)$, and assume $\Vol(\KK)\geq c_1 \left(\frac{1}{\eps\delta}\right)^{6.5} n^3$, where $c_1=2^{66}$.
Then, for $L\sim\mu_n$ we have
\begin{align}
\Pr\left(\SM(\KK,L) \geq \eps\right) <  \delta.
\label{eq:SiegelProbBound}
\end{align}
In particular, for any positive $\eps, \sigma$, 
\begin{align}
\sup_{\KK \in \Convn, \Vol(\KK) \geq n^{3+\sigma}}\Pr \left(\eta(\KK,L) \geq \eps \right) \longrightarrow_{n\to \infty} 0.\nonumber
\end{align}
\label{thm:eta_siegel}
\end{thm}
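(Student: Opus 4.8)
The plan is to pass to a second-moment / concentration argument after linearizing the problem via the Siegel and Rogers formulas. Fix $x\in\R^n$; by the Siegel mean value theorem, $\EE_{L\sim\mu_n} N(L,\KK,x-y)\,dy$ over a fundamental domain gives $\Vol(\KK)$, but more precisely, writing $f=\Ind_{\KK}$, the Siegel formula gives $\EE_{L} \sum_{0\neq v\in L} f(v+x) = \Vol(\KK)$ up to the negligible contribution of the origin. Thus $\EE_L N(L,\KK,x)$ is essentially $\Vol(\KK)=\Vol(\KK)/\covol(L)$, and proving $\SM(\KK,L)<\eps$ with high probability amounts to a uniform-in-$x$ deviation bound for the random variable $N(L,\KK,x)$ around its mean. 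I would first establish pointwise control: for a single $x$, bound $\Pr_L(|N(L,\KK,x)-\Vol(\KK)|\geq \eps\Vol(\KK))$ by Chebyshev, which requires the Rogers second-moment formula to estimate $\EE_L N(L,\KK,x)^2 = \EE_L\big(\sum_{v\in L} f(v+x)\big)^2$. The Rogers formula expresses this as $\Vol(\KK)^2$ plus a main ``diagonal-type'' term of order $\Vol(\KK)$ plus lower-order terms, so the variance is $O(\Vol(\KK))$, giving a per-point failure probability of roughly $\Vol(\KK)/(\eps\Vol(\KK))^2 = 1/(\eps^2\Vol(\KK))$.

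The key difficulty is the supremum over all $x\in\R^n$: a naive union bound over a net of $\R^n$ is hopeless because there is no compactness. Here is where the Dhar--Dvir discrete Kakeya input enters. The idea is that $N(L,\KK,\cdot)$, as a function of $x$, is constant on cells of a partition of $\R^n$ induced by the translates $\{v-\KK : v\in L\}$; one only needs to union-bound over the distinct \emph{values} this function takes, equivalently over the combinatorially distinct cells, and — crucially — one must control how many lattice points can simultaneously ``contribute'' near a given region. The Dhar--Dvir resolution of the discrete Kakeya problem provides exactly the combinatorial bound needed to limit the number of relevant configurations (and hence the effective size of the union bound) to something polynomial in $n$ and in $\Vol(\KK)$, so that the union bound against the per-point probability $1/(\eps^2\Vol(\KK))$ still goes to zero once $\Vol(\KK)\gtrsim (\eps\delta)^{-6.5}n^3$. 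The exponent $6.5$ and the $n^3$ should emerge from tracking these combinatorial counts against the second-moment estimate; the constant $c_1=2^{66}$ is just the bookkeeping of absolute constants in Rogers' formula and the Kakeya bound.

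Concretely, the steps I would carry out are: (1) record the Siegel and Rogers formulas for $f=\Ind_\KK$ and deduce $\EE_L N = \Vol(\KK) + o(1)$ and $\operatorname{Var}_L N = O(\Vol(\KK))$, with explicit constants; (2) for fixed $x$, apply Chebyshev to get the per-point tail bound; (3) invoke Dhar--Dvir to bound the number of combinatorial types of the covering-multiplicity function, or more precisely to bound the number of ``test points'' $x$ that must be checked to certify $\SM(\KK,L)<\eps$; (4) take the union bound over these test points and optimize, extracting the threshold $\Vol(\KK)\geq c_1(\eps\delta)^{-6.5}n^3$; (5) for the ``in particular'' statement, note that if $\Vol(\KK)\geq n^{3+\sigma}$ then for every fixed $\eps>0$ the hypothesis is met once $n$ is large enough (since $n^{3+\sigma}/n^3 = n^\sigma \to \infty$ dominates any fixed $(\eps\delta)^{-6.5}$), and one may take $\delta=\delta_n\to 0$ slowly, so the probability bound tends to $0$ uniformly over the stated family of bodies. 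The main obstacle is step (3): transferring the discrete, finite-field-flavored Dhar--Dvir bound into a statement about real lattices and convex bodies, which presumably requires a rescaling/discretization argument relating $\KK$ and $L$ to a combinatorial Kakeya configuration over $\Z^n$ or $\F_p^n$, and this is where the polynomial loss (and the restriction to $\Vol(\KK)$ polynomial in $n$) comes from.
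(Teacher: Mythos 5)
Your plan gets the role of the Dhar--Dvir theorem wrong, and this is not a small slip: it is the heart of the argument. Dhar--Dvir (Theorem~III.3 of \cite{dd22}, quoted as Theorem~\ref{thm: DD} here) is \emph{not} a combinatorial bound on the number of cells or configurations in an arrangement; it is a probabilistic statement that a \emph{random subspace} $S\le\F_p^n$ of dimension $r>3+n-\log_p|A|$ gives, with high probability, a shift-balanced cover of $\F_p^n$ by $A$, i.e.\ $|(x+S)\cap A|\approx |S||A|p^{-n}$ uniformly in $x\in\F_p^n$. The paper uses this directly as the source of smoothness; it never appeals to Rogers' second-moment formula, never runs a Chebyshev-plus-union-bound argument, and never counts cells. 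Your step~(3) -- ``union-bound over the distinct values this function takes'' with a Kakeya-derived cardinality bound -- is not something Dhar--Dvir supplies, and I do not see how to salvage it: the number of cells in the arrangement of $L$-translates of $\partial\KK$ inside a fundamental domain is itself random, depends on $L$, and is in general super-linear in $\Vol(\KK)$, so a union bound against the $\Theta(1/(\eps^2\Vol(\KK)))$ Chebyshev tail would not close.

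The paper's actual route sidesteps the union bound entirely. It uses the Hecke-invariance of $\mu_n$ (\cite{ORW21}*{Prop.~2.1}): if $L'\sim\mu_n$ and $S\sim\Unif(\Gr_{n,r}(\F_p))$ independently, then $L:=p^{r/n}L'(S)$ is again $\mu_n$-distributed. It conditions on $L'$ having bounded covering-to-packing ratio $\rho_\KK(L')\le\bar\rho$ (Proposition~\ref{prop: 3.6}, proved via Siegel's formula plus Markov and Jensen, not Rogers), and then proves Theorem~\ref{thm:packtocov}: the supremum over $x\in\R^n$ in the definition of $\SM(\KK,L(S))$ is reduced, by a sandwiching/discretization lemma (Lemma~\ref{lem: discretization}, using that $\frac{1}{p}L'+\rho\KK=\R^n$), to a maximum over the finite set $\PPdisco\cong\F_p^n$, and there $N(L(S),(1\pm\rho)\KK,x)=|(x+S)\cap A_i|$ for $A_i=\pi_{L'}(\frac{1}{p}L'\cap(1\pm\rho)\KK)$. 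At that point Dhar--Dvir applies verbatim to control $\sup_{x\in\F_p^n}|(x+S)\cap A_i|$; no union bound over $x$ is needed because the discrete Kakeya theorem already contains the uniformity in $x$. The power $n^3$ comes from $p^3$ with $p\asymp n/(\tau\delta)^2$ (the Dhar--Dvir requirement plus the discretization requirement $p\gg n\rho_\KK$), and the extra $0.5$ in the exponent $6.5$ comes from the $\bar\rho^6$ factor with $\bar\rho$ chosen slightly super-constant so that the $\rho_\KK$-tail in Proposition~\ref{prop: 3.6} is $o(\delta)$. Your step~(5), taking $\delta=\delta_n\to 0$ slowly to deduce the ``in particular,'' is fine and matches the paper, but the mechanism getting you to \eqref{eq:SiegelProbBound} needs to be rebuilt from scratch around the Hecke-plus-discretization reduction.
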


We did not attempt to optimize the multiplicative constant $c_1$ in this result, or similar constants $c_i$ in the sequel. 

Theorem~\ref{thm:eta_siegel} and the remaining statements below might find applications in computer science. Specifically, in lattice-based cryptography, smoothing a lattice is a key idea used to hide secret information from an adversary~\cite{MicciancioR07}. Typically, one considers smoothing by a Gaussian distribution. However, for some applications 
it might be advantageous to smooth using a convex body, since sampling from a convex body like a cube can be more efficient. 
It should be noted that in many cryptographic applications, closeness in $L_1$ is sufficient, and such $L_1$-smoothness results (in fact, even $L_2$-smoothness) are often much easier to prove (see, e.g.,~\cite{Debris23}); yet there are many cases where closeness in $L_\infty$ (as in our results) leads to tighter results~\cite{BaiLRSSS18}.





It is instructive to compare Theorem \ref{thm: this result} with our previous work \cite{ORW21} on lattice coverings. Recall that for a lattice $L \subset \R^n$ of covolume one, the {\em covering density} $\Theta(\KK, L)$ is the minimal volume of a dilate  $\alpha \KK$ such that $(L, \alpha \KK)$ is a covering. One of the results of \cite{ORW21} is that
\begin{equation}\label{eq: subexponential}
\sup_{\KK \in \Convn} \inf_{L \text{ of covolume } 1} \Theta(\KK, L)
\end{equation}
grows at most quadratically in $n$ (prior to \cite{ORW21} the best known bound, due to Rogers \cite{Rogers_bound}, was superpolynomial). In fact, it was shown that for any $\delta>0$, any $\sigma>0$, any large enough $n$ and any $\KK \in \Convn,$ if $\Vol(\KK)\geq n^{2+\sigma}$ then the $\mu_n$-probability that $(L, \KK)$ is a covering is at least $1-\delta.$
Fixing 
$\eps \in (0,1)$, we deduce easily from Theorem \ref{thm:eta_siegel} the slightly weaker statement, in which  $2+\sigma$ is replaced with $3+\sigma$.  That is,  when $\Vol(\KK) > n^{2+\sigma}$, we know from \cite{ORW21} that a random $L$ gives a covering, and from Theorem \ref{thm:eta_siegel} we know that when $\Vol(\KK)> n^{3+\sigma}$ a random $L$ gives an $\varepsilon$-smooth covering. On the other hand,~\cite{CoxeterFewRogers} shows that for $\KK$ taken as the Euclidean ball and any lattice $L$ of covolume $1$, $\eta(\KK,L)\ge 1$ (and moreover, $(L,\KK)$ is not a covering) unless $\Vol(\KK)=\Omega(n)$.

While we intuitively expect covering
to become smoother as we scale up $\KK$, it turns out that in general $\alpha\mapsto\eta(\alpha\KK,L)$ is not monotonically non-increasing. To see this, take $L=\Z^n$ and $\KK=\left[0,1\right)^n$.
Then $\eta(\KK,\Z^n)=0$ yet for small $\eps>0$,  $\eta((1+\eps)\KK,\Z^n)=(2/(1+\eps))^n-1$. It is therefore natural to further define the following quantity for a lattice $L\subset\R^n$ and $\KK\in\Convn$,
	\begin{align}
	\SMDA_{\KK,L}(\eps)\df\sup\left\{\frac{\Vol(\alpha\KK)}{\covol(L)} \ : \ \alpha>0 \text{ satisfies } \SM(\alpha\KK,L)>\eps  \right\}.\nonumber
	\end{align}
In particular, for a lattice $L\subset\R^n$ of unit covolume, we have that $\SM(\alpha\KK,L)\le \eps$ for \emph{all} dilates $\alpha\KK$ of volume exceeding
$\SMDA_{\KK,L}(\eps)$. We prove the following theorem.

\begin{thm}
 Let $n> 25$, and let $\KK\in\Convn$. Let $\delta,\eps\in(0,1)$, and $c_2=2^{112}$.
Then, for $L\sim\mu_n$ we have
\begin{align}
\Pr\left(\SMDA_{\KK,L}(\eps) \geq c_2 \left(\frac{1}{\eps^2\delta}\right)^{6.5} n^{3}\right)<\delta.
\label{eq:PhiProbBound}
\end{align}
\label{thm:Phi_siegel}
\end{thm}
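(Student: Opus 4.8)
The plan is to derive Theorem~\ref{thm:Phi_siegel} from Theorem~\ref{thm: this result} by a union bound over a geometric net of dilation factors, exploiting that both $N(L,\alpha\KK,x)$ and $\Vol(\alpha\KK)$ are monotone in $\alpha$. First I would normalize $\KK$ so that $0\in\interior\KK$: translating $\KK$ by a vector $v$ replaces $N(L,\alpha\KK,x)$ by $N(L,\alpha\KK,x+\alpha v)$ and leaves $\Vol(\alpha\KK)$ unchanged, hence affects neither $\SM(\alpha\KK,L)$ nor $\SMDA_{\KK,L}(\eps)$; and once $0\in\interior\KK$, convexity gives $\alpha\KK\subseteq\alpha'\KK$ for $0<\alpha\le\alpha'$, so for each fixed $x$ the map $\alpha\mapsto N(L,\alpha\KK,x)$ is nondecreasing while $\alpha\mapsto\Vol(\alpha\KK)$ is continuous and strictly increasing.

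Next I would set $\eps_0\df\eps/3$, fix $\theta>0$ by $(1+\theta)^n=1+\eps_0$, and put $V_1\df(c_3/(\eps_0^{2}\delta))^{6.5}\,c_1 n^3$, where $c_1=2^{66}$ is the constant of Theorem~\ref{thm: this result} and $c_3$ is an absolute constant chosen so that $1-(1+s)^{-1/6.5}\ge s/c_3$ for all $s\in(0,\tfrac13]$ (concavity of the left-hand side shows one may take $c_3$ an absolute constant of size about $8$). Let $\alpha_0$ be the unique dilation factor with $\Vol(\alpha_0\KK)=V_1$ (recall $\covol(L)=1$), and for $k\ge0$ set $\alpha_k\df\alpha_0(1+\theta)^k$ and $\KK_k\df\alpha_k\KK$, so that $\Vol(\KK_k)=V_1(1+\eps_0)^k$ and $\alpha_k\to\infty$. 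For each $k$ choose $\delta_k\df\eps_0^{-1}(c_1n^3/\Vol(\KK_k))^{1/6.5}$; this makes the volume hypothesis of Theorem~\ref{thm: this result} hold with equality for $\KK_k$ and parameters $(\eps_0,\delta_k)$, and one checks $\delta_k\in(0,1)$, so that theorem gives $\Pr(\SM(\KK_k,L)\ge\eps_0)<\delta_k$. Summing the resulting geometric series,
\begin{align*}
\sum_{k\ge0}\delta_k=\frac{(c_1n^3)^{1/6.5}}{\eps_0\,V_1^{1/6.5}}\cdot\frac{1}{1-(1+\eps_0)^{-1/6.5}}\le\frac{c_3\,(c_1n^3)^{1/6.5}}{\eps_0^{2}\,V_1^{1/6.5}}\le\delta,
\end{align*}
the last step being the definition of $V_1$. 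Hence the event $E\df\bigcap_{k\ge0}\{\SM(\KK_k,L)<\eps_0\}$ has $\Pr(E)>1-\delta$.

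It then remains to check that $E$ forces $\SMDA_{\KK,L}(\eps)\le V_1$. Given $\alpha\ge\alpha_0$, pick $k$ with $\alpha_k\le\alpha\le\alpha_{k+1}$; using $N(L,\alpha\KK,x)\le N(L,\KK_{k+1},x)$, $N(L,\alpha\KK,x)\ge N(L,\KK_k,x)$, $\Vol(\KK_k)\le\Vol(\alpha\KK)\le\Vol(\KK_{k+1})$, and $\Vol(\KK_{k+1})/\Vol(\KK_k)=1+\eps_0$, one gets on $E$, for every $x$,
\begin{align*}
1-\eps<\frac{1-\eps_0}{1+\eps_0}<\frac{N(L,\alpha\KK,x)}{\Vol(\alpha\KK)}<(1+\eps_0)^2\le 1+\eps,
\end{align*}
so $\SM(\alpha\KK,L)<\eps$ for all $\alpha\ge\alpha_0$. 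Consequently every $\alpha$ with $\SM(\alpha\KK,L)>\eps$ satisfies $\alpha<\alpha_0$, hence $\Vol(\alpha\KK)<V_1$, and therefore $\SMDA_{\KK,L}(\eps)\le V_1$ on $E$. Since $\eps_0=\eps/3$ gives $V_1=3^{13}c_3^{6.5}c_1\,(1/(\eps^2\delta))^{6.5}n^3$, taking $c_2$ any constant exceeding $3^{13}c_3^{6.5}c_1$ (in particular $c_2=2^{112}$ works) yields \eqref{eq:PhiProbBound}.

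The one genuinely delicate point is that the union bound runs over infinitely many scales; it converges because the volumes $\Vol(\KK_k)$ grow geometrically with ratio $1+\eps_0>1$ while the failure probability of Theorem~\ref{thm: this result} decays like a fixed positive power ($1/6.5$) of the inverse volume, so $\sum_k\delta_k$ is a convergent geometric series of total mass $O(1/\eps)$ times a single-scale bound --- which is exactly the source of the extra $1/\eps$ (hence the $1/\eps^2$ overall) relative to Theorem~\ref{thm: this result}. The spacing $(1+\theta)^n=1+\eps_0$ is forced both by this convergence and by the requirement that the relative point count change by a factor at most $1+\eps_0$ across one step of the net; everything else is routine bookkeeping with the absolute constants.
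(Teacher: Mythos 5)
Your proposal is correct, and it takes a genuinely different — arguably cleaner — route from the paper. The paper's proof first reduces to $\alpha\in[1,2)$ via the integer-dilate monotonicity $\eta(m\KK,L)\le\eta(\KK,L)$ (Lemma~\ref{lem:monotonicity}), packaged together with the interpolation step as Lemma~\ref{lem:netcoveringimpliesfullcovering}, and then union-bounds over a finite geometric net of $I+1\le 8n/\eps$ scales in $[1,2)$, taking $\delta_i$ to decay geometrically so that $\sum\delta_i<\delta$. You instead run the net to infinity over $[\alpha_0,\infty)$, dispensing with the integer-dilate lemma entirely and handling each intermediate $\alpha$ by the direct sandwich $N(L,\KK_k,x)\le N(L,\alpha\KK,x)\le N(L,\KK_{k+1},x)$; the infinite union bound converges for the same underlying reason the paper's finite one does, namely that the allowed failure probability of Theorem~\ref{thm: this result} scales like (inverse volume)$^{1/6.5}$ while the volumes $\Vol(\KK_k)$ grow geometrically. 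One small point you handle correctly but implicitly in the paper: the translation-invariance of $\eta(\alpha\KK,L)$ in $\KK$, which justifies assuming $0\in\interior\KK$ so that the dilates are nested. The trade-off is that the paper's route isolates the integer-dilate monotonicity as a reusable lemma (also used in the proof of Theorem~\ref{thm:eta_constA}), whereas yours is more self-contained for this particular theorem; both arrive at the same constant $c_2=2^{112}$ by comparable bookkeeping.
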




\subsection{Construction A lattices} \label{subsec: construction A}
In many applications in electrical engineering and computer science, integer lattices known as \emph{construction A lattices} are of interest~\cites{ConwaySloane,loeliger97}. For a prime $p$ let $\F_p$ denote the field with $p$ elements. For $r
\in \{1, \ldots, n\}$, let 
$\Gr_{n,r}(\F_p)$ denote the collection of subspaces of
dimension $r$ in $\F_p^n$, or equivalently, the rank-$r$ additive
subgroups of $\F_p^n$.  We can identify $\F_p$ with the residues 
$\{0, \ldots, p-1\}$, and thus identify $\F_p^n$ with the quotient
$\Z^n/p\Z^n$. We have a natural \emph{reduction mod $p$} homomorphism
$\pi_p: \Z^n \to
\F_p^n$, which sends each coordinate of $x \in \Z^n$ to its class
modulo $p$. Any element $S \in \Gr_{n,r}(\F_p)$ gives rise to a sub-lattice
$\pi_p^{-1}(S) \subset \Z^n$, which contains $p\Z^n$ as a
subgroup of index $p^{r}$,  and with $\pi_p^{-1}(S) / p\Z^n$ isomorphic
as an abelian group to $S \cong \prod_1^r \Z/p\Z$. 
The ensemble of lattices obtained by drawing $S\sim\Unif(\Gr_{n,r}(\F_p))$ and setting $L=\frac{1}{p}\cdot\pi_p^{-1}(S)$ is called the \emph{random $(p,r)$ construction A ensemble}\footnote{Some authors define the $(p,r)$ random construction $A$ ensemble slightly differently, taking  $S'=\mathrm{span}_{\F_p}(v_1,\ldots,v_r)$ and $v_1,\ldots,v_r\stackrel{i.i.d.}{\sim}\Unif(\F_p^n)$ and $L=\frac{1}{p}\cdot\pi_p^{-1}(S')$. Since $\Pr(S'\notin \Gr_{n,r}(\F_p))\leq p^{r-n}$ and, moreover, $S'$ is conditionally uniform on $\Gr_{n,r}(\F_p)$ under the event $S'\in \Gr_{n,r}(\F_p)$, we have that the total variation distance between the distributions corresponding to the two definitions is at most $p^{r-n}$.}
and such lattices are called \emph{$(p,r)$ construction A lattices}. 

Theorem~\ref{thm:eta_siegel} holds for any $\KK\in\Convn$, with uniform constants. However, if $\KK$ is such that $N(\KK,\Z^n,0)$ is large (for example $\KK = \prod_1^{n-1} [\eps^{-1}, \eps^{-1}] \times [-\eps^{n-1}, \eps^{n-1}]$ for $\eps$ small), it will not be smoothed by applying construction A, unless $p$ and $r$ are  large (depending on $\KK$). Thus  our results for construction A lattices depend on $\KK$. The dependence arises via the ratio between the {\em covering radius} and {\em packing radius} of $\KK$ with respect to $\Z^n$. Namely, for a convex body $\KK\in\Convn$ and a lattice $L\subset\R^n$ we denote by $\mathrm{r}_{\text{cov},\KK}(L)$ the  infimum of $\alpha$ for which $(L,\alpha\KK)$ is a covering, and by $\mathrm{r}_{\text{pack},\KK}(L)$ the supremum of $\alpha$ for which $(L, \alpha\KK)$ is a {\em packing}, i.e., the translates $\{\ell+\alpha\KK  : \ell\in L \}$ are disjoint.  We denote by 
\begin{align}
\rho_{\KK}(L) \df \frac{\mathrm{r}_{\text{cov},\KK}(L)}{\mathrm{r}_{\text{pack},\KK}(L)} \label{eq:covpackratio}
\end{align}
the ratio between the covering and the packing radius.
 We show that for $\KK\in\Convn$ for which both $\Vol(\KK)$ and $\rho_\KK(\Z^n)$ are polynomial in $n$, for a typical  construction A lattice with adequately tuned $p$, $r$, scaled to have unit
 covolume, the covering smoothness is small.


\begin{thm}
Let $n> 25$, and let $\KK\in\Convn$ and $b>0$ satisfy
\begin{align}
    \label{eq: new bounds n b}
0\leq b\leq \frac{n}{2 \log_2 n} \ \ \text{ and } \ \ \rho_\KK(\Z^n) < n^b.
\end{align}
Let $\delta,\eps\in(0,1)$, and assume  $\Vol(\KK)\geq c_{3} \left(\frac{1}{\eps\delta} \right)^{6} n^{3(1+2b)}$, where $c_{3}=e\cdot 2^{33}$. 
Let $p$ be a prime number satisfying 
\begin{align}
\frac{1024}{(\eps\delta)^2}n^{1+2b}\leq p\leq \frac{2048}{(\eps\delta)^2}n^{1+2b},
\label{eq:pdefconsta}
\end{align}
and $r=3+\left\lceil \frac{n}{\log p}\left(b\log n+ \log 3 \right)\right\rceil$. Then, if $L$ is drawn from the $(p,r)$ random construction A ensemble (so that $\covol(p^{r/n}L)=1$), we have
\begin{align*}
\Pr\left(\SM(\KK,p^{r/n}L) \geq \eps\right) <  \delta \; .
\end{align*}
\label{thm:eta_constA}
\end{thm}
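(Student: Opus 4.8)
The plan is to follow the strategy of the proof of Theorem~\ref{thm:eta_siegel}: establish a mean value and a second moment estimate for the relevant counting function over the random $(p,r)$ construction A ensemble, deduce pointwise concentration by Chebyshev's inequality, and then upgrade it to an estimate uniform in $x\in\R^n$ by means of the Dhar--Dvir discrete Kakeya bound. What is special to the construction A case is the combinatorics of the ensemble $\Unif(\Gr_{n,r}(\F_p))$ together with the geometric input carried by the hypothesis $\rho_\KK(\Z^n)<n^b$. Using the scaling relation $N(cL,\KK,x)=N(L,c^{-1}\KK,c^{-1}x)$ with $c=p^{r/n}$ and $L=\tfrac1p\pi_p^{-1}(S)$, one reduces to understanding, for $B\df p^{1-r/n}\KK$, the fluctuations of $y\mapsto|\pi_p^{-1}(S)\cap(B+y)|$ around its mean $\mu\df\Vol(\KK)$, this function being $\pi_p^{-1}(S)$-periodic in $y$ and having $\Vol(B)=p^{n-r}\mu$. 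The choices \eqref{eq:pdefconsta} of $p$ and of $r=3+\lceil\tfrac{n}{\log p}(b\log n+\log 3)\rceil$ are calibrated, using $\rho_\KK(\Z^n)<n^b$ and $\Vol(\KK)\ge c_3(\eps\delta)^{-6}n^{3(1+2b)}$, so that $\diam(B)<p$; hence $B+y$ contains at most one point of each coset of $p\Z^n$, and $|\pi_p^{-1}(S)\cap(B+y)|=|R_y\cap S|$ with $R_y\df\pi_p(\Z^n\cap(B+y))\subseteq\F_p^n$ and $|R_y|=|\Z^n\cap(B+y)|=\Vol(B)(1+o(1))$ uniformly in $y$, the relative error here (small compared with $\eps$) being exactly what the bound on $\rho_\KK(\Z^n)$ buys through a standard lattice point count.

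For $S\sim\Unif(\Gr_{n,r}(\F_p))$ one has $\Pr(v\in S)=\tfrac{p^r-1}{p^n-1}$ for $v\ne 0$, so $\EE|R_y\cap S|$ is within $\tfrac{\eps}{4}\mu$ of $\mu$ once the $O(\mu/p^r)$ contribution of the zero residue is discarded. Moreover the events $\{v\in S\}$ are pairwise negatively correlated (for independent $v,w$, $\Pr(v,w\in S)\le\Pr(v\in S)\Pr(w\in S)$, while for $w\in\F_p^\times v$ the two events coincide), so
\[
\mathrm{Var}\,|R_y\cap S|\;\le\;\EE|R_y\cap S|\;+\;\Pr(v\in S)\,\#\{(z_1,z_2)\in(\Z^n\cap(B+y))^2:\ \pi_p(z_1)\in\F_p^\times\pi_p(z_2),\ z_1\ne z_2\};
\]
since $\diam(B)<p$, a given residue has at most one scalar-multiple representative in $B+y$ for each scalar, so the collision count is at most $(p-1)|R_y|$ and $\mathrm{Var}\,|R_y\cap S|\lesssim p\,\mu$. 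By \eqref{eq: new bounds n b}, \eqref{eq:pdefconsta} and the volume hypothesis, Chebyshev's inequality then gives, for each fixed $y$,
\[
\Pr\Bigl(\bigl|\,|R_y\cap S|-\mu\,\bigr|\ge\tfrac{\eps}{2}\mu\Bigr)\;\lesssim\;\frac{\mathrm{Var}\,|R_y\cap S|}{(\eps\mu)^2}\;\lesssim\;\frac{p}{\eps^2\mu}\;\ll\;\delta\cdot n^{-2(1+2b)}.
\]

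The crux is to promote this to control of $\sup_y\bigl|\,|R_y\cap S|-\mu\,\bigr|$, equivalently of $\SM(\KK,p^{r/n}L)$. A net in $y$ with a union bound is hopeless on its own: as $y$ traverses a fundamental domain of $\pi_p^{-1}(S)$, the set $R_y$ changes across every translated boundary $z+\partial B$ ($z\in\Z^n$), and the arrangement these cut out has a number of cells that is exponential in $\mathrm{poly}(n)$. This is the point at which the Dhar--Dvir discrete Kakeya bound over $\F_p^n$ enters, in the same manner as in the proof of Theorem~\ref{thm:eta_siegel}: because the sets $R_y$ are lattice slices of a single convex body of diameter less than $p$, no $r$-dimensional subspace can be a bad sampler for more than a number of them that is polynomial in $n$ and $1/(\eps\delta)$, so the supremum is controlled by a union bound over that reduced family; the change of $|R_y\cap S|$ as $y$ moves within one piece of the accompanying decomposition is dominated by the count attached to a thin shell around $\partial B$, itself estimated by the same first and second moment bounds with $\eps$ replaced by a fixed fraction of it. Combined with the Chebyshev estimate above, this yields $\Pr(\SM(\KK,p^{r/n}L)\ge\eps)<\delta$.

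It remains to match the parameters, and here the power $n^{3(1+2b)}$ --- worse than the $n^{3+\sigma}$ of Theorem~\ref{thm:eta_siegel} --- is produced by three competing demands: $\diam(B)<p$, which, given $\rho_\KK(\Z^n)<n^b$ and the volume lower bound, forces $r$ to be essentially $n\log_p(n^b)$ up to the additive slack in the stated formula (so that $p^{r/n}$ slightly exceeds $\diam(\KK)$); smallness of $\mathrm{Var}/\mu^2\asymp p/\mu$ together with accuracy $o(\eps)$ of the lattice point count $|\Z^n\cap(B+y)|=\Vol(B)(1+o(1))$, which pins $p$ into the window \eqref{eq:pdefconsta}; and the Dhar--Dvir step, which requires $\Vol(\KK)$ to exceed the product of $p$ with the (polynomial) Kakeya count. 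I expect the longest routine part to be the uniform-in-$y$ control of the lattice point counts in $B+y$ and in thin shells around $\partial B$ in terms of $\rho_\KK(\Z^n)$, whereas the single genuinely hard ingredient is the Kakeya-based passage from pointwise to uniform control.
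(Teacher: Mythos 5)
Your proposal misstates the strategy of the paper and contains a step that, as written, does not work. The paper never does a second moment/Chebyshev argument and never needs to ``upgrade pointwise to uniform.'' The Dhar--Dvir theorem (Theorem~\ref{thm: DD}) is itself a uniform-in-$x$ statement: it says that a \emph{random} subspace $S$ is $\tau$-balanced with respect to \emph{every} shift $x\in\F_p^n$ with probability at least $1-\delta$. The paper's proof of Theorem~\ref{thm:eta_constA} is a very short reduction: replace $\KK$ by $\tfrac{1}{a}\KK$ for an integer $a\ge 2$ (using Lemma~\ref{lem:monotonicity}) so that $\Vol(\KK)\in[M,2^nM)$, and then apply Corollary~\ref{thm:inflatedeflateAlternative} with $L=\Z^n$, $\bar\rho=n^b$. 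The heavy lifting lives in Theorem~\ref{thm:packtocov}, whose passage from $\sup_{x\in\R^n}$ to a finite discrete maximum goes through the sandwiching $(1-\rho)\KK+y_1\subset\KK+x\subset(1+\rho)\KK+y_2$ (Proposition~\ref{prop: discretization}/Lemma~\ref{lem: discretization}) and then a \emph{direct} application of Dhar--Dvir to the two fixed sets $A_0,A_1$, with no second moment and no union bound over cells.

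Two concrete gaps in your write-up. First, the step you describe as ``no $r$-dimensional subspace can be a bad sampler for more than a number of [translates] that is polynomial \ldots so the supremum is controlled by a union bound over that reduced family'' is not what the Dhar--Dvir theorem provides, and I do not see how to justify it; even if it were true it would not give the desired conclusion, since you need \emph{no} bad cells, not polynomially many. Your Chebyshev bound $\Pr(|R_y\cap S|-\mu|\ge\tfrac{\eps}{2}\mu)\lesssim p/(\eps^2\mu)$ is correct pointwise, but you have not closed the loop to the supremum, and the ``thin shell'' estimate you wave at would have to be combined with a union bound over a number of pieces that you yourself note is exponential in $\mathrm{poly}(n)$. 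Second, your argument never invokes Lemma~\ref{lem:monotonicity} (the inequality $\eta(m\KK,L)\le\eta(\KK,L)$ for integers $m$). The theorem only assumes a lower bound on $\Vol(\KK)$, while $p$ and $r$ are fixed by \eqref{eq:pdefconsta}; if $\Vol(\KK)$ vastly exceeds $c_3(\eps\delta)^{-6}n^{3(1+2b)}$, conditions such as your $\diam(B)<p$ (or the paper's $p<(e^{-1}\Vol(\KK))^{1/3}$ in \eqref{eq:primelimits}) fail outright, and replacing $\KK$ by a suitable integer-fraction dilate to land $\Vol(\KK)$ in $[M,2^nM)$ is exactly the device that repairs this.
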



An important special case is the Euclidean ball, namely $\KK=\mathcal{B}_n=\{x\in\R^n \ : \ \|x\|_2\leq 1 \}$. It is well known (and easy to see) that \begin{align*}
\mathrm{r}_{\text{pack},\mathcal{B}_n}(\Z^n)&=\frac12 \ \ \ \ \text{ and } \ \  \mathrm{r}_{\text{cov},\mathcal{B}_n}(\Z^n)=\frac{\sqrt{n}}{2},
\end{align*}
which gives
\begin{align*}
\rho_{\mathcal{B}_n}(\Z^n)=n^{1/2}.
\end{align*}
Thus, the following is an immediate consequence of Theorem~\ref{thm:eta_constA}.

\begin{cor}
Let $n>25$, and let $\delta,\eps\in(0,1)$, and for $\alpha>0$, assume 
$$\Vol(\alpha \mathcal{B}_n)\geq c_{3} \left(\frac{1}{\eps\delta} \right)^{6} n^{6},$$ 
where $c_{3}=e\cdot 2^{33}$. 
Let $p$ be a prime number satisfying 
\begin{align*}
\frac{1024}{(\eps\delta)^2}n^{2}\leq p\leq \frac{2048}{(\eps\delta)^2}n^{2},
\end{align*}
and $r=3+\left\lceil \frac{n}{\log p}\left(\frac{1}{2}\log 9n\right)\right\rceil$. Then if $L$ is drawn from the $(p,r)$ random construction A ensemble (so that $\covol(p^{r/n}L)=1$), we have
\begin{align*}
\Pr\left(\SM(\alpha \mathcal{B}_n,p^{r/n}L) \geq \eps\right) <  \delta \; .
\end{align*}
\label{cor:constAball}
\end{cor}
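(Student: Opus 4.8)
The plan is to obtain this corollary as a direct specialization of Theorem~\ref{thm:eta_constA} to the body $\KK = \alpha\mathcal{B}_n$, with the parameter $b$ set equal to $1/2$. First I would record the two elementary geometric facts about the Euclidean ball relative to $\Z^n$ that are quoted in the text just above the corollary: any two distinct points of $\Z^n$ are at Euclidean distance at least $1$, with equality for $0$ and a standard basis vector, so that $\mathrm{r}_{\text{pack},\mathcal{B}_n}(\Z^n) = \tfrac12$; and the point $\bigl(\tfrac12,\dots,\tfrac12\bigr)$ is a deep hole at distance $\tfrac{\sqrt n}{2}$ from $\Z^n$, while every point of $\R^n$ lies within distance $\tfrac{\sqrt n}{2}$ of $\Z^n$ (it lies in some unit cube, whose circumradius is $\tfrac{\sqrt n}{2}$), so that $\mathrm{r}_{\text{cov},\mathcal{B}_n}(\Z^n) = \tfrac{\sqrt n}{2}$. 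These give $\rho_{\mathcal{B}_n}(\Z^n) = \sqrt n = n^{1/2}$, and since dilating a convex body by a factor $\alpha > 0$ scales both radii by $\alpha^{-1}$ and hence leaves their ratio unchanged, also $\rho_{\alpha\mathcal{B}_n}(\Z^n) = n^{1/2}$.

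Next I would verify that Theorem~\ref{thm:eta_constA} applies to $\KK = \alpha\mathcal{B}_n$ with $b = 1/2$. The condition $0 \le b \le \tfrac{n}{2\log_2 n}$ in \eqref{eq: new bounds n b} holds since $\tfrac{n}{2\log_2 n} \ge \tfrac12$ for all $n \ge 2$, in particular for $n > 25$; and the condition $\rho_{\alpha\mathcal{B}_n}(\Z^n) < n^{b}$ reads $n^{1/2} < n^{1/2}$, an equality rather than a strict inequality. This is the one cosmetic point to address, and I would handle it either by observing that the proof of Theorem~\ref{thm:eta_constA} in fact only uses the non-strict bound $\rho_\KK(\Z^n) \le n^b$, or by applying the theorem with $b$ replaced by an arbitrary $b' \in (1/2,\tfrac{n}{2\log_2 n}]$ and then letting $b' \downarrow 1/2$, since every quantity appearing in the conclusion depends continuously on $b$. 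Granting this, I would substitute $b = 1/2$ into the remaining hypotheses of Theorem~\ref{thm:eta_constA} and check that they reduce verbatim to those of the corollary: $3(1+2b) = 6$, so the volume requirement becomes $\Vol(\alpha\mathcal{B}_n) \ge c_3\bigl(\tfrac{1}{\eps\delta}\bigr)^{6} n^{6}$; $1+2b = 2$, so \eqref{eq:pdefconsta} becomes $\tfrac{1024}{(\eps\delta)^2}n^{2} \le p \le \tfrac{2048}{(\eps\delta)^2}n^{2}$; and $b\log n + \log 3 = \tfrac12\log n + \log 3 = \tfrac12(\log n + \log 9) = \tfrac12\log(9n)$, so the prescribed rank becomes $r = 3 + \bigl\lceil \tfrac{n}{\log p}\bigl(\tfrac12\log(9n)\bigr)\bigr\rceil$.

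With all hypotheses matched, Theorem~\ref{thm:eta_constA} then yields directly, for $L$ drawn from the $(p,r)$ random construction A ensemble normalized so that $\covol(p^{r/n}L) = 1$, the bound $\Pr\bigl(\SM(\alpha\mathcal{B}_n,\,p^{r/n}L) \ge \eps\bigr) < \delta$, which is exactly the assertion of the corollary. I do not anticipate any genuine difficulty here: the whole argument is the substitution $b = 1/2$, and the only points needing a sentence each are the two classical radius computations for $\mathcal{B}_n$ versus $\Z^n$ and the harmless strict-versus-non-strict inequality just discussed.
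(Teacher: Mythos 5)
Your proof is correct and takes essentially the same approach as the paper, which computes $\rho_{\mathcal{B}_n}(\Z^n)=n^{1/2}$ and then invokes Theorem~\ref{thm:eta_constA} with $b=1/2$ as an ``immediate consequence.'' Your observation that the theorem's hypothesis $\rho_{\KK}(\Z^n)<n^b$ is only met with equality here is a fair nitpick the paper glosses over, and your fix is the right one: the proof of Theorem~\ref{thm:eta_constA} passes through Corollary~\ref{thm:inflatedeflateAlternative}, which only requires the non-strict bound $\rho_{\KK}(L)\le\bar{\rho}$.
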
 

Similarly to the case where $L$ is drawn at random according to the distribution $\mu_n$, we can also show that for $L$ drawn from the $(p,r)$ random construction A ensemble, for $\KK\in\Convn$ with $\rho_{\KK}(\Z^n)$ polynomial in $n$, we have that with high probability $\SMDA_{\KK,L}(\eps)$ is also polynomial in $n$, provided that $p$ and $r$ are chosen adequately.

\begin{thm}
Let $n>25$, and let $\KK\in\Convn$ and $b>0$ satisfy~\eqref{eq: new bounds n b}.
Let $\delta,\eps\in(0,1)$, and  $c_{4}=e\cdot 2^{57}$. 
Let $p$ be a prime number satisfying 
\begin{align}
\frac{2^{18}}{(\eps^2\delta)^2}n^{3+2b}\leq p\leq \frac{2^{19}}{(\eps^2\delta)^2}n^{3+2b},
\label{eq:pdefconstaPhi}
\end{align}
and $r=3+\left\lceil \frac{n}{\log p}\left(b\log n+ \log 3 \right)\right\rceil$.
Then, if $L$ is drawn from the $(p,r)$ random construction A ensemble, we have
\begin{align}
\Pr\left(\SMDA_{\KK,L}(\eps)>c_{4} \left(\frac{1}{\eps^2\delta} \right)^{6} n^{9+6b}\right)\leq \delta.
\end{align}
\label{thm:Phi_constA}
\end{thm}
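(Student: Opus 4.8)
The plan is to reduce Theorem~\ref{thm:Phi_constA} to Theorem~\ref{thm:eta_constA} applied to a geometric family of dilates of $\KK$, following the same route by which Theorem~\ref{thm:Phi_siegel} is obtained from Theorem~\ref{thm:eta_siegel}. The one feature special to construction~A is that the covering-to-packing ratio is scale invariant, $\rho_{\alpha\KK}(\Z^n)=\rho_\KK(\Z^n)<n^b$ for every $\alpha>0$, so a single prime $p$ and rank $r$ serve all dilates at once; this is what allows the fixed $p,r$ of~\eqref{eq:pdefconstaPhi} to be used throughout.

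I would begin with two harmless reductions. Translating $\KK$ leaves $\SM(\alpha\KK,L)$, $\Vol(\alpha\KK)$ and $\rho_\KK(\Z^n)$ unchanged, so assume $0\in\interior\KK$; then $\alpha\mapsto N(L,\alpha\KK,x)$ is nondecreasing for each fixed $x$ and $L$, which is what makes the sandwiching below work. Since $\SMDA_{\KK,L}(\eps)$ is unchanged under rescaling $L$, replace $L$ by $L'\df p^{r/n}L$, which has unit covolume and satisfies $L'\supseteq p^{r/n}\Z^n$.

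Now set $\tilde\eps=\eps/3$ and $\tilde\delta=\tfrac{3\eps\delta}{16n}$, so $\tilde\eps\tilde\delta=\tfrac{\eps^2\delta}{16n}$. A direct check shows that the $p$ of~\eqref{eq:pdefconstaPhi} lies in the window $\bigl[\tfrac{1024}{(\tilde\eps\tilde\delta)^2}n^{1+2b},\tfrac{2048}{(\tilde\eps\tilde\delta)^2}n^{1+2b}\bigr]$, that $r$ has the form prescribed by Theorem~\ref{thm:eta_constA} for parameters $(\tilde\eps,\tilde\delta)$, and --- using $c_3\cdot16^6=c_4$ --- that the volume threshold $c_3\bigl(\tfrac1{\tilde\eps\tilde\delta}\bigr)^6n^{3(1+2b)}$ of Theorem~\ref{thm:eta_constA} equals exactly $V\df c_4\bigl(\tfrac1{\eps^2\delta}\bigr)^6n^{9+6b}$, the quantity claimed to bound $\SMDA_{\KK,L}(\eps)$. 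Let $\alpha_0>0$ be given by $\Vol(\alpha_0\KK)=V$ and put $\alpha_i=(1+\eps/3)^{i/n}\alpha_0$, so $\Vol(\alpha_i\KK)=(1+\eps/3)^iV$. Theorem~\ref{thm:eta_constA} applies to each $\alpha_i\KK$ (since $\rho_{\alpha_i\KK}(\Z^n)<n^b$ and $\Vol(\alpha_i\KK)\ge V$) and bounds $\Pr\bigl(\SM(\alpha_i\KK,L')\ge\eps/3\bigr)$. On the event that $\SM(\alpha_i\KK,L')\le\eps/3$ for every $i\ge0$, monotonicity gives, for $\alpha\in[\alpha_i,\alpha_{i+1}]$ and any $x$, $N(L',\alpha_i\KK,x)\le N(L',\alpha\KK,x)\le N(L',\alpha_{i+1}\KK,x)$, which together with $\Vol(\alpha_{i+1}\KK)/\Vol(\alpha_i\KK)=1+\eps/3$ yields $\tfrac{1-\eps/3}{1+\eps/3}\le\tfrac{N(L',\alpha\KK,x)}{\Vol(\alpha\KK)}\le(1+\eps/3)^2$, hence $\SM(\alpha\KK,L')\le\eps$. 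Thus every $\alpha$ with $\SM(\alpha\KK,L')>\eps$ has $\Vol(\alpha\KK)<V$, i.e.\ $\SMDA_{\KK,L'}(\eps)\le V$; since $\SMDA_{\KK,L}(\eps)=\SMDA_{\KK,L'}(\eps)$ this is the claim.

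What remains is to bound $\Pr\bigl(\exists\,i\ge0:\SM(\alpha_i\KK,L')>\eps/3\bigr)$ by $\delta$, and this is the one real point: a crude union bound over infinitely many scales diverges. I would handle it by one (or a combination) of two devices. First, re-examine the proof of Theorem~\ref{thm:eta_constA}: both of its hypotheses --- on $\Vol$ and on $p$ --- tighten as the volume grows, so one should be able to extract a bound on $\Pr(\SM(\alpha_i\KK,L')\ge\eps/3)$ that decays as a negative power of $\Vol(\alpha_i\KK)=(1+\eps/3)^iV$, making the sum over the geometric grid a convergent geometric series that is $\le\delta$ by the choice of $V$. Alternatively, truncate the grid using a \emph{deterministic} tail bound available for construction~A: since $L'\supseteq p^{r/n}\Z^n$ and $\rho_\KK(\Z^n)<n^b$, one has $\mathrm{r}_{\text{cov},\KK}(L')\le p^{r/n}\,\mathrm{r}_{\text{cov},\KK}(\Z^n)<p^{r/n}n^b\Vol(\KK)^{-1/n}$, and the classical layer estimate $\bigl|N(L',\alpha\KK,x)-\Vol(\alpha\KK)\bigr|\le\bigl((\alpha+\mathrm{r}_{\text{cov},\KK}(L'))^n-(\alpha-\mathrm{r}_{\text{cov},\KK}(L'))^n\bigr)\Vol(\KK)$ forces $\SM(\alpha\KK,L')\le\eps$ once $\alpha$ exceeds a $\mathrm{poly}(n)$ multiple of $\mathrm{r}_{\text{cov},\KK}(L')/\eps$; already the $i=0$ instance of Theorem~\ref{thm:eta_constA} gives $\SM(\alpha_0\KK,L')<1$, hence $\mathrm{r}_{\text{cov},\KK}(L')\le\alpha_0$, off an event of probability $<\delta$, so only polynomially many scales need a probabilistic treatment and the union bound costs only the $\mathrm{poly}(n)$ factor already absorbed into $V$. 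I expect this efficient handling of the union bound over dilation scales to be the main obstacle; the remaining ingredients --- the two reductions, the bookkeeping that makes $V$ exactly the threshold of Theorem~\ref{thm:eta_constA} at parameters $(\eps/3,\tfrac{3\eps\delta}{16n})$, the sandwiching, and the passage from uniform-in-$\alpha$ smoothness to a bound on $\SMDA$ --- are routine. This is also precisely where the construction~A case is harder than the Siegel case of Theorem~\ref{thm:Phi_siegel}: there is no deterministic bound on the covering radius of a $\mu_n$-random lattice, which is ultimately why the exponents in Theorem~\ref{thm:Phi_constA} are larger.
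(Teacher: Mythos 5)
Your general strategy is the right one --- a geometric grid of dilates, Theorem~\ref{thm:eta_constA} on each with boosted parameters, a union bound, and a sandwiching step --- but you have missed the one technical lemma that makes the union bound finite, and as a result you elevate to ``the one real point'' a difficulty the paper does not have. The paper proves (Lemma~\ref{lem:monotonicity}) that for any lattice $L$ and any positive integer $m$, $\SM(m\KK,L)\le\SM(\KK,L)$, because $N(L,m\KK,x)=\sum_a N(L,\KK,x/m-a)$ is an average over the $m^n$ cosets of $L$ in $\frac{1}{m}L$. Lemma~\ref{lem:netcoveringimpliesfullcovering} then combines this with the sandwiching estimate you describe: if $\SM(\alpha_i\KK,L)\le\eps/2$ on a $(1+\eps/8n)$-geometric net $\alpha_0=1,\ldots,\alpha_I\ge 2$ covering $[1,2]$, then $\SM(\alpha\KK,L)<\eps$ for all $\alpha\in[1,2)$, and for $\alpha\ge 2$ one writes $\alpha=m\alpha'$ with $m\in\N$ and $\alpha'\in[1,2)$ and invokes Lemma~\ref{lem:monotonicity}. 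That net has size $I+1\le 8n/\eps$, so the paper applies Theorem~\ref{thm:eta_constA} with $\eps'=\eps/2$, $\delta'=\delta/(I+1)$ (hence $\eps'\delta'\ge\eps^2\delta/16n$, which is exactly what $p$, $r$, $c_4$ are calibrated against) and union-bounds over a \emph{finite} set. The WLOG normalization $\Vol(\KK)=c_4(\tfrac{1}{\eps^2\delta})^6 n^{9+6b}$ at the start of the proof makes ``$\alpha\ge 1$'' coincide with ``$\Vol(\alpha\KK)\ge V$.''

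Your proposal instead runs the grid $\alpha_i$ over all $i\ge 0$ and then, correctly observing that an infinite union bound diverges, sketches two rescue devices. Neither is available as stated: Theorem~\ref{thm:eta_constA} gives the fixed bound $\delta$ and does not by itself yield decay as a negative power of $\Vol(\alpha_i\KK)$ (the parameters $p$ and $r$ are tied to a single volume window), so device~(1) would require reproving a quantitative strengthening; and device~(2)'s ``classical layer estimate'' in the form you wrote it (a bound of type $|N-\Vol|\le((\alpha+r_{\mathrm{cov}})^n-(\alpha-r_{\mathrm{cov}})^n)\Vol(\KK)$) is not what the paper proves --- the correct relative bound, in the spirit of Lemma~\ref{lem:Gauss}, would have to be set up carefully, and in any case gives another source of loss. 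Both are real extra work that the integer-dilation monotonicity avoids entirely. So the concrete gap is the absence of Lemma~\ref{lem:monotonicity} (and hence of Lemma~\ref{lem:netcoveringimpliesfullcovering}): once you have it, the proof is essentially identical to that of Theorem~\ref{thm:Phi_siegel} with Theorem~\ref{thm:eta_constA} substituted for Theorem~\ref{thm:eta_siegel}, which is exactly how the paper proceeds. Your parenthetical remark that ``there is no deterministic bound on the covering radius of a $\mu_n$-random lattice, which is ultimately why the exponents in Theorem~\ref{thm:Phi_constA} are larger'' is therefore off: the two proofs have the same structure, and the extra exponent in the construction~A case comes from the dependence on $\rho_\KK(\Z^n)\le n^b$ already present in Theorem~\ref{thm:eta_constA}, not from any special handling of the union bound.
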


\subsection{Non-lattice smooth coverings}
If one relaxes the requirement that $L$ is a lattice, it is slightly more complicated to define smooth covers, but much easier to construct them. 

Let $L \subset \R^n$ be a discrete set (not necessarily a lattice). We continue to use the notation $N(L, \KK, x)$ defined in \eqref{eq: we denote}. Let $B(0,T)$ denote the ball of radius $T$ around the origin with respect to the Euclidean norm, and define the {\em asymptotic upper density} of $L$ by
\begin{equation}\label{eq: def D}
D(L) \df \limsup_{T \to \infty} \frac{|B(0,T) \cap L|}{\Vol(B(0,T))}.
\end{equation}
If the limit in \eqref{eq: def D} exists we will say that {\em $L$ has an asymptotic density}. 
Note that lattices have an asymptotic density given by $D(L) =\covol(L)^{-1}$.
Now for $\KK \in \Convn$, and $L$ as above, we set 
	\begin{align*}
	\SM(\KK,L)\df\sup_{x\in\R^n}\left|\frac{N(L,\KK,x)}{\Vol(\KK) \, D(L)}-1\right|.\nonumber
	\end{align*}
With this notation we have:
\begin{thm}\label{thm: non lattice}
For any $0<\eps<1$, $n\geq 20$, and any $\KK \in \Convn$ 
there is a discrete set $L \subset \R^n$ which has an asymptotic density, satisfying 
\begin{equation}\label{eq: desired bounds}
\Vol(\KK) D(L) \leq \frac{14}{\eps^2}\left(n\log n+n\log\frac{1280}{\eps}+2\right) \ \ \text{ and } \ \ \eta(\KK, L)<\eps.
\end{equation}
\end{thm}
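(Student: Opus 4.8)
\emph{Proof proposal.} I would prove this by the probabilistic method, taking $L$ to be a random periodic set over a suitably scaled integer lattice, after first normalizing $\KK$ by an affine map. Every quantity in \eqref{eq: desired bounds} is invariant under $\GL_n(\R)$: for $g\in\GL_n(\R)$ one has $\eta(g\KK,gL)=\eta(\KK,L)$ and $\Vol(g\KK)\,D(gL)=\Vol(\KK)\,D(L)$. Hence, by John's theorem and an affine change of coordinates, I may assume $\mathcal B_n\subseteq\KK\subseteq n\,\mathcal B_n$, where $\mathcal B_n$ is the Euclidean unit ball; in particular $\KK$ has inradius at least $1$ and circumradius at most $n$. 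Set $\Lambda\df 3n\,\Z^n$, let $F=[0,3n)^n$, draw $v_1,\dots,v_N$ i.i.d.\ uniform in $F$ for a parameter $N$ to be chosen, and put
\[
L\df\bigcup_{i=1}^N(\Lambda+v_i).
\]
Almost surely the $v_i$ are distinct modulo $\Lambda$, so $L$ is a $\Lambda$-periodic discrete set with $N$ points per translate of $F$; thus $L$ has an asymptotic density $D(L)=N/(3n)^n$, and writing $\bar\mu\df\Vol(\KK)/(3n)^n$ we get $\Vol(\KK)\,D(L)=N\bar\mu$. The aim is to take $N$ essentially as small as this identity permits while forcing $\eta(\KK,L)<\eps$.

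\emph{Reducing ``all $x$'' to a net.} For any $x$, $N(L,\KK,x)=\sum_{i=1}^N N(\Lambda,\KK,x-v_i)$, and since $x-v_i$ is equidistributed on $\R^n/\Lambda$, a standard Fubini computation gives $\EE\,N(\Lambda,\KK,x-v_i)=\Vol(\KK)/\covol(\Lambda)=\bar\mu$ for every $x$; hence $\EE\,N(L,\KK,x)=N\bar\mu$ for all $x$. Put $\eps'\df\eps/(8n)$ and let $\mathcal N\subseteq F$ be a maximal $\eps'$-separated set, so $|\mathcal N|\le(3n/\eps')^n=(24n^2/\eps)^n$ and every $x$ is within Euclidean distance $\eps'$ of some point of $\Lambda+\mathcal N$. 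Using $\mathcal B_n\subseteq\KK$ and convexity, for $\|w\|\le\eps'$ one has $\KK+w\subseteq(1+\eps')\KK$ and $(1-\eps')\KK\subseteq\KK+w$; consequently, for $x$ within $\eps'$ of $x_0\in\Lambda+\mathcal N$,
\[
N\big(L,(1-\eps')\KK,x_0\big)\ \le\ N(L,\KK,x)\ \le\ N\big(L,(1+\eps')\KK,x_0\big).
\]
Since $\EE\,N(L,(1\pm\eps')\KK,x_0)=(1\pm\eps')^n N\bar\mu$ and $(1\pm\eps')^n\in[1-\tfrac\eps4,\,1+\tfrac\eps4]$ by the choice of $\eps'$, it now suffices to show that with positive probability, simultaneously over $x_0\in\mathcal N$, both $N(L,(1+\eps')\KK,x_0)$ and $N(L,(1-\eps')\KK,x_0)$ lie within a factor $1\pm\tfrac\eps2$ of their means.

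\emph{Concentration and union bound.} Fix $x_0$ and a sign. Then $N(L,(1\pm\eps')\KK,x_0)=\sum_{i=1}^N X_i$ with $X_i\df N(\Lambda,(1\pm\eps')\KK,x_0-v_i)$ i.i.d., and $0\le X_i\le M$ with $M\df\max_y|\Lambda\cap((1+\eps')\KK+y)|$. Now $(1+\eps')\KK\subseteq(1+\eps')n\,\mathcal B_n$, so its difference body lies in a ball of radius $<3n$, which is the minimal distance of $\Lambda=3n\Z^n$; hence any translate of $(1+\eps')\KK$ contains at most one point of $\Lambda$, i.e.\ $M=1$. Each sum has mean in $[\tfrac34 N\bar\mu,\tfrac54 N\bar\mu]$, so by the multiplicative Chernoff inequality (or, for sharper constants, Bennett's inequality, which is favorable here since $\bar\mu$ is typically tiny) for sums of $[0,1]$-valued i.i.d.\ variables, the probability of deviating from its mean by more than a factor $1\pm\tfrac\eps2$ is at most $2\exp(-c\,\eps^2 N\bar\mu)$ for an absolute $c>0$. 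A union bound over the $2|\mathcal N|\le 2(24n^2/\eps)^n$ relevant events shows the construction succeeds with positive probability provided $\Vol(\KK)\,D(L)=N\bar\mu>\frac{C}{\eps^2}\big(n\log(24n^2/\eps)+\log 2\big)$ for a suitable absolute $C$. Choosing $N$ minimally subject to this and applying the previous paragraph gives, on a positive-probability event, $\eta(\KK,L)<\eps$ together with $\Vol(\KK)\,D(L)=O\big(\eps^{-2}(n\log n+n\log\tfrac1\eps)\big)$; optimizing the constants (Bennett's inequality, fine-tuning $\eps'$ and the base-lattice scale) yields the explicit bound \eqref{eq: desired bounds} for $n\ge 20$.

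\emph{Main obstacle.} The two genuinely constraining features are precisely what forces the John-position normalization: (i) the bound on $\Vol(\KK)\,D(L)$ must be \emph{uniform over all} $\KK\in\Convn$, but a very ``thin'' body has arbitrarily small inradius, so without normalization the $\eps'$-net would have size depending on $\KK$; and (ii) the per-summand bound $M$ must be an absolute constant, which is what keeps the final bound at the level $n\log n$ rather than $n^2\log n$ or worse, and is the reason the base lattice is scaled to $3n\Z^n$, matched to the circumradius bound $\KK\subseteq n\mathcal B_n$. Once these are in place the remainder is a routine Chernoff-and-union-bound estimate; the only real care is in the passage from ``all $x$'' to the net (keeping $\eps'=\Theta(\eps/n)$ so that $(1\pm\eps')^n$ stays within $1\pm O(\eps)$) and in squeezing the absolute constants down to the stated values.
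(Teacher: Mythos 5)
Your scaffold --- a random $\Lambda$-periodic set with $N$ offsets per cell, Chernoff concentration, a union bound over a finite set of probe points, and the observation $M=1$ from a circumradius bound --- is the same probabilistic skeleton as the paper's proof, which likewise builds a periodic set by adding $m$ random cosets to a base lattice $L$ and appeals to Lemma~\ref{lem:etaFrand}. The divergence, and the gap, is in the discretization.

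Your net-size bound is wrong, and the error is structural rather than a tunable constant. For a maximal \emph{Euclidean} $\eps'$-separated set $\mathcal N\subset[0,3n)^n$ the packing argument gives
$|\mathcal N|\le \Vol\bigl([0,3n)^n+\tfrac{\eps'}{2}\mathcal B_n\bigr)\big/\Vol\bigl(\tfrac{\eps'}{2}\mathcal B_n\bigr)$,
and the $\Vol(\mathcal B_n)^{-1}\asymp(n/2\pi e)^{n/2}$ factor in the denominator is not ignorable; one gets $\log|\mathcal N|=\tfrac{5}{2}n\log n+O(n\log(1/\eps))$, not $n\log(3n/\eps')=2n\log n+\cdots$ as you claim. (If you instead take an $\ell_\infty$-grid of spacing $\eps'$, you do get $(3n/\eps')^n$ points, but then you only control $\|x-x_0\|_\infty$, and $\eps'[-1,1]^n\subseteq\eps'\KK$ does \emph{not} follow from $\mathcal B_n\subseteq\KK$ --- so the sandwich $(1-\eps')\KK+x_0\subseteq\KK+x\subseteq(1+\eps')\KK+x_0$ fails.) Either way, the union bound forces $\Vol(\KK)D(L)\gtrsim\tfrac{3}{\tau^2}\log|\mathcal N|\approx\tfrac{30}{\eps^2}\,n\log n$ (with $\tau=\eps/2$), roughly twice the target $\tfrac{14}{\eps^2}\,n\log n$. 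The extra $\tfrac{3}{2}n\log n$ is the price of fixing $\Lambda=3n\Z^n$ after John normalization: its covolume $(3n)^n$ can exceed $\Vol(\KK)\ge\Vol(\mathcal B_n)$ by a factor $\approx n^{n/2}$, and the fineness $\eps'\asymp\eps/n$ then yields a net with $\gtrsim n^{5n/2}$ cells. The paper avoids John's theorem entirely: Proposition~\ref{prop: 3.6} (proved via Siegel's formula and Rogers--Shephard) supplies a base lattice $L$ \emph{adapted to $\KK$}, with $\rho_{\KK}(L)\le 40$ and hence $\covol(L)/\Vol(\KK)\le 40^n$, and then takes the probe set to be the $p$-adic refinement $\PPdisc\cong\F_p^n$ with $p\asymp n/\eps$, giving $p^n$ points and $\log p^n=n\log n+O(n\log(1/\eps))$. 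So the correct fix to your plan is to let the base lattice adapt to $\KK$ with a dimension-free bound on $\rho_{\KK}(\Lambda)$; that existence statement (Prop.~\ref{prop: 3.6}, or Butler/Bourgain/Rogers) is an additional ingredient that John's theorem does not provide.
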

We remark that the set $L$ constructed in the proof of Theorem \ref{thm: non lattice} is \emph{periodic}, i.e., consists of finitely many translates of a lattice in $\R^n$. 
We also remark that Theorem~\ref{thm: non lattice} can be derived by modifying the proof of Erd{\H{o}}s and Rogers~\cite{ErdosRogers}, who proved a closely related statement. For completeness, we include a proof in Section~\ref{sec:nonlattice} that follows the proof structure of  our main theorem. 

\subsection{Acknowledgements}
The authors are grateful to Bo'az Klartag for suggesting the question of seeking smooth lattice coverings, to Manik Dhar and Ze'ev Dvir for sharing an early draft of their result, and to Chris Peikert for useful comments. 
The first author is supported by ISF 1641/21, the second author is supported by a Simons Investigator Award from the Simons Foundation, and the third author is supported by ISF 2019/19 and ISF-NSFC 3739/21.
\section{Techniques and Notation}
The main results of this paper follow from the somewhat technical Theorem \ref{thm:packtocov}. This result is derived in turn from a new result of Dhar and Dvir (Theorem \ref{thm: DD}), which is a crucial input to this paper. 
In this section we introduce notations, give a brief overview of our approach, and state Theorem~\ref{thm: DD}.

For a
lattice $L \subset\R^n$ let 
$\TT_L \df \R^n/L$ be the quotient torus, let $m_L$ be the Haar probability
measure on $\TT_L$, and let $\pi_L : \R^n \to
\TT_L$ be the quotient map.  
Let $v_1, \ldots, v_n$ be the generators of $L$ given by the columns of $g$
so that the parallellepiped 
$$
	\PP_L = \left\{\sum a_i v_i : \forall i, \ 0 \leq a_i < 1 \right \}
$$
is a fundamental
domain for $\R^n/L$. Define the discrete `net' 
\begin{equation}
    \label{eq: def P disc}
\PPdisc \df \left\{
	\sum a_i v_i \in \PP_L:  a_i \in  \left\{ 0, \frac{1}{p}, \ldots, 1-\frac{1}{p}
	\right \} \right\}
\end{equation}
and set
$$\frac{L}{p} \df 
\frac{1}{p} \cdot L.
$$
Then the elements of $\PPdisc  $ are coset representatives for the inclusion $L \subset \frac{L}{p},$ and there is an isomorphism (as abelian groups) $\PPdisc \cong\F_p^n.$

Next, we introduce a well-studied technique for randomly choosing lattices. 
Given $L=g\Z^n$, where $g$ is an invertible $n \times n$ matrix, and given  $S\in\Gr_{n,r}(\F_p)$, we define the  super-lattice $L(S)\supset L$ as
\begin{align}
L(S)\df \frac{1}{p} \cdot g \pi_p^{-1}(S).
\label{eq:LSdef}
\end{align}
Notice that the scaled-up version $p^{r/n} \cdot L(S)$ of $L(S)$ is of the same covolume as $L$.
The assignment 
\begin{equation} \label{eq: Hecke friends} L \ \mapsto \ \left\{p^{r/n} L(S):S \in \Gr_{n,r}(\F_p) \right\}
\end{equation}
is a special case of the so-called {\em Hecke correspondence.}
Note that the individual lattice $L(S)$ also depends on the initial choice of $g$ for which $L = g\Z^n,$ but the collection on the right-hand side of \eqref{eq: Hecke friends} does not. 
Also note that Construction A lattices are a special case of this construction (up to scaling), starting with $L = \Z^n$.

Given a convex body $\KK$, our goal is to find a lattice for which the covering smoothness is small. 
We will choose this lattice to be $L(S)$ for a randomly chosen $S \in \Gr_{n,r}(\F_p)$ for some $r$ and $p$, and where $L$ is a lattice for which we have a reasonable bound on $\rho_{\KK}(L)$. 
For instance, in the proof of Theorem~\ref{thm: this result}, we will take $L$ to be a randomly chosen lattice according to the Haar-Siegel measure $\mu_n$, which has a small $\rho_{\KK}$ (Proposition~\ref{prop: 3.6}); 
importantly, by~\cite{ORW21}*{Proposition 2.1}, $L(S)$ is also distributed according to $\mu_n$ (up to scaling), as needed for the conclusion of Theorem~\ref{thm: this result}. 

By rescaling $\KK$ we may assume that $L$ forms a packing with respect to $\KK$ and a covering with respect to the dilate $\rho_{\KK}(L) \KK$. Recall that our goal is to show that the function 
\begin{equation}\label{eq: function to be made constant}
\Tor_{L(S)} \to \N, \ \ \  \ x \mapsto |(x+L(S))\cap \KK|
\end{equation}
is uniformly close to a constant function. By a discretization procedure (see Proposition \ref{prop: discretization}), it will be sufficient to show that the restriction of the function in~\eqref{eq: function to be made constant} to $\mathcal{P}^{(\mathrm{disc})}_L$ is close to a constant function. This uses the fact that $L$ is a covering with respect to $\rho_{\KK}(L) \KK$ and assumes that $p$ is chosen sufficiently large with respect to $\rho_{\KK}(L)$.

The final step in the proof is to reduce the problem to an analogous problem in $\F_p^n$. 
Denoting by $A\subset \F_p^n$ the set $\pi_{L}\big(\frac{L}{p}\cap\KK\big)$ viewed as a subset of $\F_p^n$, we have that for any $x \in \mathcal{P}^{(\mathrm{disc})}_L$, 
\begin{align}
   |(x+L(S)) \cap \KK| = 
   \Big|\pi_L\Big((x+L(S)) \cap \KK\Big)\Big| = 
   |(x+S)\cap A|\; ,  \label{eq:cont to discrete}
\end{align}
where the first equality uses the assumption that $L$ forms a packing with respect to $\KK$ (and so $\pi_L$ is injective on $\KK$) and on the right-hand side we think of $x$ as being in $\F_p^n$. 
Thus our problem reduces to showing that a randomly chosen $S$ leads to a smooth covering of $\F_p^n$  by the $S$-translates of $A$. This is precisely the problem addressed by Dhar and Dvir.

To state their result we need the following discrete analogue of the covering smoothness:
 
\begin{dfn}\label{dfn: smoothness discrete}
	Let $p$ be a prime number. The {\em smoothness of a set $S\subset\F_p^n$ with respect to a set $A\subset\F_p^n$} is defined as 
	\begin{align*}
	\SMfp(A,S)\df\sup_{x\in\F_p^n}\left|\frac{|(x+S)\cap A|}{|S|\cdot|A|\cdot p^{-n}}-1\right|.\nonumber
	\end{align*}
\end{dfn}
With this notation we have:\footnote{The precise statement given in~\cite{dd22}*{Theorem III.3} deals only with the smallest possible choice of $r=3+n-\lfloor\log_p |A|\rfloor=\lceil 3+n-\log_p |A|\rceil$. However, any larger choice of $r$ also works, since, as noted in~\cite{dd22}, if a subspace $S$ is $\tau$-shift-balanced, and $S'$ is a subspace containing $S$, then $S'$ is also $\tau$-shift-balanced.}
\begin{thm}[Dhar and Dvir, Theorem III.3 in~\cite{dd22}]\label{thm: DD}
	Let $n\geq 5$, let $\delta,\tau\in(0,1)$ and let $p$ be a prime number satisfying $p>64n/(\tau\delta)^2$. Let $A\subset \F_p^n$ and let $4\leq r\leq n$ be an integer satisfying $r>3+n-\log_p |A|$. Then for $S\sim \Unif\left(\Gr_{n,r}(\F_p)\right)$ we have
	\begin{align}
	\Pr(\SMfp(A,S)>\tau)<\delta.\label{eq:ddbound}
	\end{align}
	\label{thm:dd22}
\end{thm}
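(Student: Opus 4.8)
This is a theorem of Dhar and Dvir, so no independent proof is needed in this paper; but if one wanted to reprove it, the natural route is Fourier analysis on $\F_p^n$ combined with a moment computation over the random subspace. \textbf{Step 1 (Fourier reduction).} Set $V\df S^{\perp}$, which for $S\sim\Unif(\Gr_{n,r}(\F_p))$ is a uniformly random subspace of dimension $m\df n-r$. Writing $e_p(t)\df e^{2\pi i t/p}$ and $\widehat A(\xi)\df\sum_{a\in A}e_p(\langle\xi,a\rangle)$, the identity $\Ind[y\in V^{\perp}]=p^{-m}\sum_{\xi\in V}e_p(\langle\xi,y\rangle)$ gives, for every $x\in\F_p^n$,
\[
|(x+S)\cap A|=\frac{1}{p^{m}}\sum_{\xi\in V}e_p(-\langle\xi,x\rangle)\widehat A(\xi)=\frac{|A|}{p^{m}}+\frac{1}{p^{m}}Z_x,\qquad Z_x\df\sum_{\xi\in V\setminus\{0\}}e_p(-\langle\xi,x\rangle)\widehat A(\xi).
\]
Hence $\SMfp(A,S)\le\tau$ is exactly the statement $\sup_{x\in\F_p^n}|Z_x|\le\tau|A|$, and it suffices to bound $\Pr_V\bigl(\sup_x|Z_x|>\tau|A|\bigr)$.

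\textbf{Step 2 (moments and union over cosets).} Fix a coset representative $x$. For an even exponent $2k$, expand $\EE_V|Z_x|^{2k}$ as a weighted sum over $2k$-tuples $(\xi_1,\dots,\xi_k,\eta_1,\dots,\eta_k)$ of nonzero vectors, with weight $\prod_j\widehat A(\xi_j)\prod_j\overline{\widehat A(\eta_j)}\cdot e_p\bigl(\langle\textstyle\sum_j\eta_j-\sum_j\xi_j,\,x\rangle\bigr)$ times $\Pr_V[\xi_1,\dots,\eta_k\in V]$. Grouping tuples by the dimension $d$ of $\operatorname{span}\{\xi_1,\dots,\eta_k\}$, the probability factor is $\binom{n-d}{m-d}_p/\binom{n}{m}_p\le 2^{d}p^{-rd}$, and using Parseval ($\sum_{\xi}|\widehat A(\xi)|^2=p^n|A|$) together with the hypothesis $|A|>p^{\,3+m}$ one estimates the sum of absolute weights over a fixed $d$-dimensional span and then over the $\binom{n}{d}_p$ spans. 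This should produce a bound of the shape $\EE_V|Z_x|^{2k}\le C^{k}p^{km}|A|^{k}\cdot(\text{lower order})$, whence $\Pr_V(|Z_x|>\tau|A|)\le\EE_V|Z_x|^{2k}/(\tau|A|)^{2k}$. Since there are exactly $p^{m}$ cosets of $S$, $\Pr_V(\sup_x|Z_x|>\tau|A|)\le p^{m}\max_x\Pr_V(|Z_x|>\tau|A|)$; choosing $k$ of order $m/\log_p(\cdots)$ makes the factor $p^m$ absorbable into $(\tau|A|)^{2k}$, and the remaining constants and Gaussian-binomial counts are swallowed by the hypothesis $p>64n/(\tau\delta)^2$, giving the bound $<\delta$ in \eqref{eq:ddbound}.

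\textbf{Main obstacle.} The second moment alone ($k=1$) only yields the conclusion when $|A|\gtrsim p^{\,2m}/(\tau^2\delta)$, which is \emph{weaker} than the hypothesis $|A|>p^{\,3+m}$ precisely when $r$ is small, i.e.\ $m=n-r$ large --- and that is the regime arising in the applications (Theorems~\ref{thm:eta_constA} and~\ref{thm:Phi_constA}). The trouble is that the crude reduction $\sup_x|Z_x|^2\le\sum_x|Z_x|^2=p^{m}\sum_{\xi\in V\setminus\{0\}}|\widehat A(\xi)|^2$ discards the factor $p^m$ coming from the number of cosets, a loss one cannot afford; recovering it requires a genuine high-moment estimate that simultaneously exploits the cancellation in the phases $e_p(-\langle\xi,x\rangle)$ and the fact that a uniformly random $m$-dimensional $V$ meets the Fourier support of $A$ in only its expected dimension. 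This is the technical heart of the matter; in \cite{dd22} it is handled not by the moment bookkeeping sketched above but by the polynomial method --- bounding the number of ``heavy'' cosets by producing a low-degree polynomial vanishing on an associated Kakeya-type configuration and then running a dimension-versus-degree count. I would expect reproducing their sharp dependence on $n,p,\tau,\delta$ to be by far the hardest part; the Fourier reduction in Step 1 and the bookkeeping in Step 2 are routine once the right moment or polynomial input is available.
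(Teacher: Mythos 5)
You are correct that the paper does not prove this theorem: it is imported verbatim from Dhar and Dvir \cite{dd22}, and the only proof content the paper supplies is the footnote explaining why the hypothesis can be relaxed from the sharp value $r=\lceil 3+n-\log_p|A|\rceil$ to any larger $r\le n$, namely that shift-balancedness is monotone under passing to a larger subspace $S'\supset S$. Your proposal does not address this small but genuine piece of the paper's argument, which is the one thing actually proved here rather than cited.

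On the sketch itself: you have correctly and honestly flagged that it does not constitute a proof. Step 1 (the Fourier reduction to $\sup_x|Z_x|\le\tau|A|$) is fine. Step 2 is not a proof but a wish: the phrase ``this should produce a bound of the shape $\EE_V|Z_x|^{2k}\le C^k p^{km}|A|^k\cdot(\text{lower order})$'' is exactly the estimate that one would need and that the moment bookkeeping, as set up, does not deliver. You correctly diagnose the reason in your ``Main obstacle'' paragraph: a naive union bound over the $p^m$ cosets loses a factor one cannot afford, and the second moment gives a condition roughly $|A|\gtrsim p^{2m}$, strictly worse than the hypothesis $|A|>p^{3+m}$ once $m=n-r$ is large. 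You also correctly identify that \cite{dd22} does not proceed by moment estimates at all but by the polynomial method (bounding the number of heavy cosets via a low-degree polynomial vanishing on a Kakeya-type configuration). In short: your write-up is an accurate meta-description of what a proof would have to do and of where the difficulty lies, but it is not a proof, and the one step the paper does prove — monotonicity in $r$ — is missing from your account.
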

To summarize, by~\eqref{eq:cont to discrete} and the discretization argument, if the conclusion of Theorem~\ref{thm: DD} holds with $A=\pi_{L}\big(\frac{L}{p}\cap\KK\big)$, then we obtain the desired result, namely, that with high probability, $L(S)$ is a smooth covering for $\KK$ of density
\[
\frac{\Vol(\KK)}{\covol(L(S))}=
p^r\frac{\Vol(\KK)}{\covol(L)} \; .
\]
Noting that $|A|\approx p^n \frac{\Vol(\KK)}{\covol(L)}$ (Lemma~\ref{lem:Gauss}) and taking  $r=3+n-\log_p|A|$ (ignoring here the technicality of $r$ having to be an integer) we get a density of about $p^3$. Finally, we need to choose $p$ to satisfy the conditions of Theorem~\ref{thm: DD} and be large enough compared to  $\rho_{\KK}(L)$ for the discretization argument to work. 


We end this section by noting that improvements to Theorem~\ref{thm: DD} will yield improvements in our results. We make this precise in Remark~\ref{rem:ddparams}.

\section{Proofs of Main Results for Lattices}
\subsection{Discretization}
For subsets $A, B \subset \R^n$ and $c \in \R$ we denote as usual $$A+B \df \{a+b: a \in A, b \in B\}, \ \ cA \df \{ca: a \in A\}.$$
\begin{prop}\label{prop: discretization}
	Let $L'\subset\mathbb{R}^n$ be a discrete subset of $\R^n$ such that $L'=-L',$ let
        $\KK\in\Convn$, and assume $0<\rho<1$ is
        such that $L'+\rho \KK=\mathbb{R}^n$. Then for any $x \in \R^n$ there are $y_1, y_2 
        \in L'$ such that 
  \begin{equation}\label{eq: containment both sides}
  (1-\rho)\KK +y_1 \subset \KK+x \subset (1+\rho)\KK+y_2.
  \end{equation}
  %
\end{prop}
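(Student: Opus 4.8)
The plan rests on a single elementary consequence of convexity: for a convex set $\KK$ and scalars $\alpha,\beta\ge 0$ with $\alpha+\beta>0$ one has $(\alpha+\beta)\KK=\alpha\KK+\beta\KK$. Here the inclusion $\supseteq$ is immediate from $(\alpha+\beta)c=\alpha c+\beta c$, and $\subseteq$ follows by rewriting $\alpha a+\beta b=(\alpha+\beta)\big(\tfrac{\alpha}{\alpha+\beta}a+\tfrac{\beta}{\alpha+\beta}b\big)$ and using that the bracketed point is a convex combination of $a,b\in\KK$. Applying this with $(\alpha,\beta)=(1-\rho,\rho)$ and with $(\alpha,\beta)=(1,\rho)$ — both legitimate since $0<\rho<1$ — gives the two identities I will use repeatedly:
\[
\KK=(1-\rho)\KK+\rho\KK,\qquad (1+\rho)\KK=\KK+\rho\KK.
\]

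For the right-hand inclusion in \eqref{eq: containment both sides} I would argue as follows. The covering hypothesis $L'+\rho\KK=\R^n$ applied to the point $x$ produces $y_2\in L'$ with $x-y_2\in\rho\KK$. Translating,
\[
\KK+x=(\KK+y_2)+(x-y_2)\subset \KK+y_2+\rho\KK=(1+\rho)\KK+y_2,
\]
which is exactly what is wanted; note that the symmetry of $L'$ plays no role in this half.

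The left-hand inclusion is where the hypothesis $L'=-L'$ enters, and I expect this to be the only genuinely delicate point. I would apply the covering hypothesis to the point $-x$: there is $z\in L'$ with $-x-z\in\rho\KK$. Set $y_1\df -z$, which lies in $L'$ precisely because $L'=-L'$; then $y_1-x=-x-z\in\rho\KK$, say $y_1-x=\rho k$ with $k\in\KK$. Now for any $a\in\KK$,
\[
(1-\rho)a+y_1=x+\big((1-\rho)a+\rho k\big)\in x+\KK,
\]
since $(1-\rho)a+\rho k$ is a convex combination of $a,k\in\KK$; hence $(1-\rho)\KK+y_1\subset\KK+x$. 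Combining the two inclusions yields \eqref{eq: containment both sides}.

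The main obstacle — such as it is — is recognizing that the left inclusion \emph{cannot} be obtained by applying the covering hypothesis directly at $x$: that would only place a point of $L'$ in $x+\rho(-\KK)$, whereas the argument needs a point of $L'$ in $x+\rho\KK$. The fix is exactly to invoke the covering at $-x$ and reflect back through $L'=-L'$. Everything else is the bookkeeping of the two displayed convexity identities; in particular, boundedness and nonempty interior of $\KK$, as well as any group structure on $L'$ beyond central symmetry, are not used.
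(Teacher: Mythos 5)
Your argument is correct and essentially identical to the paper's: both halves use the Minkowski-sum identity $\alpha\KK+\beta\KK=(\alpha+\beta)\KK$, the right inclusion applies the covering hypothesis at $x$ directly, and the left inclusion uses $L'=-L'$ to locate a point of $L'$ in $x+\rho\KK$ rather than $x-\rho\KK$ (the paper does this by symmetrizing the covering equation $\R^n=L'-\rho\KK$; you do it by applying the covering at $-x$ and reflecting, which amounts to the same thing). Your remark that the left inclusion cannot be obtained by applying the covering at $x$ alone, absent symmetry of $\KK$, is a correct and useful observation, implicitly present in the paper as well.
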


\begin{proof}
The convexity of $\KK$ implies that for any positive $\alpha$ and $\beta$ we have 
\begin{align}
\alpha \KK + \beta \KK = 
(\alpha + \beta) \left\{\frac{\alpha}{\alpha + \beta}k_1 + \frac{\beta}{\alpha+\beta} k_2: k_1, k_2 \in \KK \right\}  =(\alpha + \beta)\KK.  \label{eq:convsetsum}  
\end{align}
For the containment on the right-hand side of \eqref{eq: containment both sides},  since $L'+\rho \KK=\mathbb{R}^n$, for any 
                  $x\in\mathbb{R}^n$ there is $y_2\in L'$ such
                  that $x\in y_2+\rho \KK$. Thus if $y\in \KK +x $ 
                  then $y\in \KK + \rho \KK + y_2 =(1+\rho)\KK +y_2.$
 

 For the other containment, since $\R^n = -\R^n = -(L'+\rho \KK) = L' - \rho \KK$, there is $y_1 \in L'$ such that $x \in y_1 - \rho \KK,$ and thus $y_1 \in x+\rho \KK$. Now if 
 $y\in (1-\rho) \KK + y_1$ 
then  $y \in (1-\rho) \KK +\rho \KK +x = \KK+x. $
\end{proof}

As an immediate corollary we see that if $L', \, \KK$ and $\rho$ satisfy the conditions of Proposition \ref{prop: discretization} and $L \subset \R^k$ is a discrete subset, then: 
	\begin{enumerate}
		\item \label{item: 1}
  $\forall x\in\mathbb{R}^n  \ \exists x'\in L'$
                  such that $N(L,\KK,x)\leq N(L,(1+\rho)\KK ,x') $; 
		\item \label{item:disc2cont_min}$\forall x\in\mathbb{R}^n  \ \exists x'\in L'$
                  such that $N(L,\KK,x)\geq N(L,(1-\rho)\KK ,x') $. 
	\end{enumerate}
Consequently, we have:

\begin{lem}\label{lem: discretization}
	Let $L', \KK$ and $\rho$ satisfy the conditions of Proposition \ref{prop: discretization} and let $L \subset\mathbb{R}^n$ be a discrete subset. Then: 
	\begin{enumerate}
		\item $\max_{x\in\mathbb{R}^n}N(L,\KK,x)\leq \max_{x'\in L'}N(L,(1+\rho)\KK ,x') $;
		\item $\min_{x\in\mathbb{R}^n} N(L,\KK,x)\geq
		\min_{x'\in L'} N(L,(1-\rho)\KK ,x') $. 
	\end{enumerate}
\label{lem:dic2cont}
\end{lem}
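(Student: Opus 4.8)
The plan is to deduce the lemma directly from Proposition~\ref{prop: discretization} by counting points of $L$ inside the nested bodies it produces, and then passing to extrema over $x$.

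First I would record the pointwise consequences (these are the numbered items (1) and (2) stated just before the lemma). Fix $x\in\R^n$. Proposition~\ref{prop: discretization} supplies points $y_1,y_2\in L'$ with
$$(1-\rho)\KK+y_1\ \subset\ \KK+x\ \subset\ (1+\rho)\KK+y_2.$$
Intersecting each of these sets with $L$, taking cardinalities, and recalling the definition $N(L,\KK,x)=|L\cap(\KK+x)|$ from \eqref{eq: we denote}, the monotonicity of $B\mapsto|L\cap B|$ under inclusion yields
$$N(L,(1-\rho)\KK,y_1)\ \le\ N(L,\KK,x)\ \le\ N(L,(1+\rho)\KK,y_2).$$
The left-hand containment is precisely where the hypothesis $L'=-L'$ is used, via the argument already carried out in the proof of Proposition~\ref{prop: discretization}.

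For part (1), the right-hand inequality says that for every $x\in\R^n$ there exists $x'\in L'$ (namely $x'=y_2$) with $N(L,\KK,x)\le N(L,(1+\rho)\KK,x')\le\max_{x'\in L'}N(L,(1+\rho)\KK,x')$; taking the maximum over $x\in\R^n$ on the left gives the claim. Part (2) is symmetric: the left-hand inequality gives, for every $x$, a point $x'=y_1\in L'$ with $N(L,\KK,x)\ge N(L,(1-\rho)\KK,x')\ge\min_{x'\in L'}N(L,(1-\rho)\KK,x')$, and taking the minimum over $x$ yields the stated bound.

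There is essentially no obstacle here — all the real content is in Proposition~\ref{prop: discretization}, and the lemma is the routine ``take extrema'' corollary. The only point worth a remark is that the $\max$ and $\min$ in the statement are genuinely meaningful (finite, and in fact attained) in the cases of interest: when $L$ is a lattice, or more generally a uniformly discrete set, and the dilate of $\KK$ in question is bounded, $N(L,\cdot,\cdot)$ is uniformly bounded in its third argument, so writing $\max$/$\min$ in place of $\sup$/$\inf$ is harmless; for a general discrete $L$ one simply reads the two sides as $\sup$ and $\inf$.
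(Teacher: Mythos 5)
Your proof is correct and follows the same route as the paper: you extract exactly the two pointwise consequences of Proposition~\ref{prop: discretization} that the paper records immediately before the lemma, and then pass to extrema over $x$. The closing remark about $\max/\min$ versus $\sup/\inf$ is a sensible clarification but not needed in the setting the paper uses the lemma.
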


The following standard lemma will be useful. We give the proof for lack of a suitable reference. 

\begin{lem}\label{lem: analogy with}
Let $L\subset\R^n$ be a lattice and $\Dd\in\Convn$, and assume that $L+\beta\Dd=\R^n$ for some $\beta\in (0,1)$. Then
\begin{align}
    (1-\beta)^n\frac{\Vol(\Dd)}{\covol(L)}\leq |L\cap\Dd|\leq (1+\beta)^n\frac{\Vol(\Dd)}{\covol(L)}.
    \label{eq:volpluscountest}
\end{align}
\label{lem:Gauss}
\end{lem}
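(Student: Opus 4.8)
\textbf{Proof plan for Lemma~\ref{lem:Gauss}.} The statement is the classical fact that a lattice that covers $\R^n$ via translates of $\beta\Dd$ cannot have too few or too many points in $\Dd$ itself, with explicit $(1\pm\beta)^n$ constants. The plan is to compare the volume of $\Dd$, respectively of $(1-\beta)\Dd$, with the volume of the union of the cells $\p+\PP_L$ over the lattice points $\p\in L$ that are relevant.

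For the \emph{upper bound}, I would start from the tautological identity $\Vol(\Dd)=\sum_{\p\in L}\Vol\big(\Dd\cap(\p+\PP_L)\big)$, where $\PP_L$ is a fundamental domain for $L$ of volume $\covol(L)$; only those $\p$ for which $\Dd\cap(\p+\PP_L)\neq\varnothing$ contribute. The key geometric observation is that each such $\p$ lies in $\Dd+ (-\PP_L)$, and since $\PP_L$ has diameter controlled by the covering hypothesis — here one uses that $L+\beta\Dd=\R^n$ forces $\PP_L$ to be contained in a translate of $\beta\Dd$, so that $-\PP_L\subset \beta\Dd + v$ for a suitable $v$ — each such $\p+\PP_L$ is contained in $\Dd+\beta\Dd+(\text{translation})=(1+\beta)\Dd+(\text{translation})$, using the convexity identity~\eqref{eq:convsetsum}. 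Summing volumes, the number of contributing $\p$ times $\covol(L)$ is at most $\Vol((1+\beta)\Dd)=(1+\beta)^n\Vol(\Dd)$, which gives $|L\cap\Dd|\le(1+\beta)^n\Vol(\Dd)/\covol(L)$. (One should be slightly careful: $|L\cap\Dd|$ counts lattice points inside $\Dd$, whereas the union argument naturally counts cells meeting $\Dd$; since $\p\in\Dd$ implies the cell $\p+\PP_L$ meets a fixed neighborhood, a clean way is instead to note $\bigsqcup_{\p\in L\cap\Dd}(\p+\PP_L)\subset \Dd + (-\PP_L)\subset (1+\beta)\Dd + v$, giving the bound directly by disjointness of the cells.)

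For the \emph{lower bound}, the cleanest route is to cover $(1-\beta)\Dd$ by translates $\p+\beta\Dd$, $\p\in L$: since $L+\beta\Dd=\R^n$, every point of $(1-\beta)\Dd$ lies in some $\p+\beta\Dd$, and for such $\p$ one has $\p\in (1-\beta)\Dd + (-\beta\Dd) = \Dd + v'$ — but to land inside $\Dd$ itself one wants $\p\in(1-\beta)\Dd - \beta\Dd$, and by~\eqref{eq:convsetsum} (applied after recentering, or directly since $(1-\beta)+\beta=1$) this is a translate of $\Dd$; a symmetrization or a careful choice shows the relevant $\p$ can be taken in $L\cap\Dd$. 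Hence $(1-\beta)\Dd\subset\bigcup_{\p\in L\cap\Dd}(\p+\beta\Dd)$, and taking volumes, $\Vol((1-\beta)\Dd)=(1-\beta)^n\Vol(\Dd)\le |L\cap\Dd|\cdot\Vol(\beta\Dd)=|L\cap\Dd|\,\beta^n\Vol(\Dd)$ — this is the wrong normalization, so instead one compares with $\covol(L)$ via $\bigcup_{\p\in L\cap\Dd}(\p+\PP_L)\supset (1-\beta)\Dd$ after noting each cell $\p+\PP_L$ is within a translate of $\beta\Dd$ of $\p$; concretely $(1-\beta)\Dd \subset (L\cap\Dd)+\PP_L$ would follow, whence $(1-\beta)^n\Vol(\Dd)\le |L\cap\Dd|\cdot\covol(L)$.

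\textbf{Main obstacle.} The routine part is the volume summation; the genuinely careful point is the \emph{bookkeeping of which lattice points to include}, i.e.\ showing that the cells one sums over can all be indexed by $\p\in L\cap\Dd$ (not merely $\p$ near $\Dd$), so that the count that appears is exactly $|L\cap\Dd|$ rather than the number of cells meeting $\Dd$. This is where the hypothesis $\beta<1$ and the convexity identity~\eqref{eq:convsetsum} do the real work: they let one absorb the fundamental-domain cell (which sits inside a $\beta$-dilate of $\Dd$ by the covering hypothesis) into a $(1\pm\beta)$-dilate of $\Dd$ while keeping the lattice point genuinely inside $\Dd$. I expect the write-up to proceed by fixing $\PP_L$, invoking the covering hypothesis to get $\diam$-type control on $\PP_L$ relative to $\Dd$, and then running the two inclusions above.
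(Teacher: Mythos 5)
Your overall strategy — sandwich $|L\cap\Dd|\cdot\covol(L)$ between volumes of dilates of $\Dd$ by decomposing into fundamental‑domain cells — is the right one and is essentially what the paper does. But your sketch has two gaps, one of which is fatal as written.

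First, a minor point: you repeatedly use the fixed parallelepiped $\PP_L$ and claim the covering hypothesis "forces $\PP_L$ to be contained in a translate of $\beta\Dd$." It doesn't: a parallelepiped built from an arbitrary basis need not fit in any translate of $\beta\Dd$ even when $L+\beta\Dd=\R^n$. What the hypothesis does give you is the \emph{existence} of a (measurable) fundamental domain $\mathcal V$ with $\mathcal V\subseteq\beta\Dd$ — choose, for each coset of $L$, a representative lying in $\beta\Dd$. The paper works with such a $\mathcal V$, not with $\PP_L$.

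Second, and this is the real gap: for the lower bound you land on the set $(1-\beta)\Dd-\beta\Dd$ and assert (citing \eqref{eq:convsetsum} "applied after recentering") that it is a translate of $\Dd$. That is false for non‑symmetric $\Dd$, and $\Convn$ does not assume symmetry. Identity \eqref{eq:convsetsum} applies to sums $\alpha\Dd+\beta\Dd$; the difference $(1-\beta)\Dd-\beta\Dd=(1-\beta)\Dd+\beta(-\Dd)$ is generally a different, larger body (for a triangle with $\beta=1/2$, it is a hexagon). Your hedge about "a symmetrization or a careful choice" points at the difficulty without resolving it. The paper's fix is clean and worth internalizing: take $\mathcal V\subseteq\beta\Dd$ as above, use $\mathcal V$ for the upper inclusion $(L\cap\Dd)+\mathcal V\subseteq\Dd+\beta\Dd=(1+\beta)\Dd$, and use the \emph{reflected} fundamental domain $-\mathcal V$ for the lower inclusion. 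Since $-\mathcal V$ is also a fundamental domain, any $x\in(1-\beta)\Dd$ has a unique $y\in L$ with $x\in y-\mathcal V$; then $y\in x+\mathcal V\subseteq(1-\beta)\Dd+\beta\Dd=\Dd$, so $y\in L\cap\Dd$ and $x\in(L\cap\Dd)+(-\mathcal V)$. Reflecting the fundamental domain is exactly what flips the sign so that \eqref{eq:convsetsum} applies with a genuine $+$ and no symmetry assumption is needed. With that change your argument becomes the paper's proof.
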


\begin{proof}
Let 
$N_{\Dd}\df L\cap \Dd,
$
and let $\mathcal{V}$ be a fundamental domain for $L$ contained in $\beta \Dd$; that is, a measurable set such that for each $x \in \R^n$ there is exactly one $\ell \in L$ for which $x \in \ell + \mathcal{V}$. Such a fundamental domain exists since $L+\beta \Dd=\R^n$. Define the sets 
\begin{align*}
\mathcal{S}_+& \df N_{\Dd}+\mathcal{V}\\
\mathcal{S}_-& \df N_{\Dd}+(-\mathcal{V}).
\end{align*}
We have that $\Vol(\mathcal{S}_+)=\Vol(\mathcal{S}_-)=|N_{\Dd}|\covol(L)$. Thus, to establish~\eqref{eq:volpluscountest}, it suffices to show that
\begin{align*}
\mathcal{S}_+&\subseteq (1+\beta)\Dd\\
\mathcal{S}_-&\supseteq (1-\beta)\Dd.
\end{align*} 
The inclusion $\mathcal{S}_+\subseteq (1+\beta)\Dd$ follows from $N_{\Dd}\subset \Dd$, $\mathcal{V}\subseteq \beta \Dd$ and the convexity of $\Dd$, using~\eqref{eq:convsetsum}. To see that $(1-\beta)\Dd\subseteq \mathcal{S}_-$, 
since $-\mathcal{V}$ is also a fundamental domain for $L$, 
for any $x\in (1-\beta)\Dd$ there is $y\in L$ such that $x\in y-\mathcal{V}$. Thus, $y\in x+\mathcal{V}\subset (1-\beta) \Dd + \beta \Dd = \Dd$, and consequently $y\in N_{\Dd}$.
\end{proof}

\subsection{From packing to smooth covering}
Our analysis of $\eta (\KK,L(S))$ (the covering smoothness of a lattice, Definition \ref{dfn: smoothness continuous}) relies on the analysis of $\eta_{\F_p}(A,S)$ (the analogous discrete  smoothness, Definition \ref{dfn: smoothness discrete}), where $A$ is a discrete analogue of the projection of $\KK$ modulo $L$, and $S$ is a randomly chosen subspace of $\F_p^n \cong \PPdisc$ of dimension $r$, where $r$ will be carefully  chosen.
 
We now derive our main technical statement from Theorem~\ref{thm:dd22}.

\begin{thm}\label{thm:packtocov}
Let $n\geq 5$, let $\delta,\tau\in(0,1)$ and let $p$ be a prime number satisfying 
$ p > 64n/(\tau\delta)^2$. 
Let $L\subset\R^n$ be a lattice and let $\KK\in\Convn$, and assume that there is some real number $1<c<\frac{p}{2n}$ such that $\big(L, \big(1+\frac{c}{p}\big)\KK \big)$ is a packing and $(L,c\KK)$ is a covering. 
Let $4\leq r\leq n$ be an integer satisfying 
\begin{align}\label{eq:assumption on r}
r>3+ \log_p\frac{\covol(L)}{\Vol(\KK)}+3c \, \frac{n}{p\log p}\, ,   
\end{align}
and let
$S\sim \Unif\left(\Gr_{n,r}(\F_p)\right)$. Then
\begin{align}\label{eq: conclusion probability bound}
\Pr\left(\SM(\KK,L(S)) \geq \tau+8c\, \frac{n}{p} \right)< 2\delta.
\end{align}
\end{thm}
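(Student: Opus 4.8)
The strategy is to run the reduction sketched in Section~2: translate the continuous smoothness $\eta(\KK, L(S))$ into the discrete smoothness $\eta_{\F_p}(A, S)$ of an appropriate subset $A \subset \F_p^n$, apply Theorem~\ref{thm:dd22}, and account for the discretization errors. First I would normalize so that the hypotheses are available: $\big(L, (1+c/p)\KK\big)$ is a packing and $(L, c\KK)$ is a covering, with $1 < c < p/(2n)$. Set $L' \df \PPdisc$ viewed inside $\R^n$; note $L' = -L'$ modulo $L$ and, more to the point, that the scaled net $\frac{1}{p}L$ (of which $\PPdisc$ is a set of coset representatives) is $\frac{c}{p}\KK$-dense, since $L$ being a $c\KK$-covering means $L + c\KK = \R^n$, hence $\frac{1}{p}L + \frac{c}{p}\KK = \R^n$. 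This is exactly the hypothesis needed to feed $\rho = c/p$ into Proposition~\ref{prop: discretization} (here $c/p < 1/(2n) < 1$).

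\textbf{Step 1: discretize.} Apply Lemma~\ref{lem: discretization} with the discrete set $\frac1pL$ in the role of ``$L'$'', the body $\KK$, and $\rho = c/p$, and with $L(S)$ in the role of the counting lattice. This gives
\begin{align*}
\max_{x \in \R^n} N(L(S), \KK, x) &\leq \max_{x' \in \frac1pL} N\big(L(S), (1+\tfrac cp)\KK, x'\big),\\
\min_{x \in \R^n} N(L(S), \KK, x) &\geq \min_{x' \in \frac1pL} N\big(L(S), (1-\tfrac cp)\KK, x'\big).
\end{align*}
Since $L(S) \supset L$ and $\frac1pL \supset L$, and since $L(S)/L \cong S$ while the distinct cosets of $L$ in $\frac1pL$ are represented by $\PPdisc \cong \F_p^n$, translating a point $x'$ by an element of $L$ changes nothing; so it suffices to take $x'$ ranging over $\PPdisc$. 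Thus I reduce to controlling, over $x \in \PPdisc$, the quantities $N\big(L(S), (1 \pm \tfrac cp)\KK, x\big)$.

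\textbf{Step 2: pass to $\F_p^n$.} Because $\big(L, (1+\tfrac cp)\KK\big)$ is a packing, $\pi_L$ is injective on any translate of $(1\pm\tfrac cp)\KK$, so for $x \in \PPdisc$,
\[
N\big(L(S), (1\pm\tfrac cp)\KK, x\big) = \big|(x + S) \cap A_\pm\big|, \quad A_\pm \df \pi_L\big(\tfrac1pL \cap (1\pm\tfrac cp)\KK\big) \subset \F_p^n,
\]
exactly as in~\eqref{eq:cont to discrete}. Now estimate $|A_\pm|$ via Lemma~\ref{lem:Gauss} applied to the lattice $\frac1pL$ and the body $(1\pm\tfrac cp)\KK$: since $L$ is a $c\KK$-covering, $\frac1pL$ is a $\frac cp(1\pm\frac cp)^{-1}(1\pm\frac cp)\KK$-covering — more simply, $\frac1pL + \frac cp\KK = \R^n$ gives $\beta_\pm \df \frac{c/p}{1\pm c/p} < 1$ as the covering parameter for $(1\pm\frac cp)\KK$, so
\[
(1-\beta_\pm)^n\, p^n\tfrac{\Vol((1\pm c/p)\KK)}{\covol(L)} \le |A_\pm| \le (1+\beta_\pm)^n\, p^n\tfrac{\Vol((1\pm c/p)\KK)}{\covol(L)}.
\]
Combining with $\Vol((1\pm\tfrac cp)\KK) = (1\pm\tfrac cp)^n\Vol(\KK)$ and using $c/p < 1/(2n)$, all the $(1\pm O(c/p))^n$ factors lie within $e^{\pm O(cn/p)}$ of $1$; in particular $|A_\pm| \ge \frac12 p^n \frac{\Vol(\KK)}{\covol(L)}$, so $\log_p|A_\pm| \ge n - \log_p\frac{\covol(L)}{\Vol(\KK)} - 1 - o(1)$, which together with~\eqref{eq:assumption on r} ensures $r > 3 + n - \log_p|A_\pm|$ — the dimension hypothesis of Theorem~\ref{thm:dd22}. (The extra term $3c\,\frac{n}{p\log p}$ in~\eqref{eq:assumption on r} is precisely there to absorb the $cn/p$-type slack in $\log_p|A_\pm|$.)

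\textbf{Step 3: apply Dhar--Dvir and collect errors.} The prime bound $p > 64n/(\tau\delta)^2$ is the hypothesis of Theorem~\ref{thm:dd22}. Apply it twice, once with $A = A_+$ and once with $A = A_-$: each time, with probability $> 1-\delta$, for all $x$, $\big||(x+S)\cap A_\pm| - |S|\,|A_\pm|\,p^{-n}\big| \le \tau\,|S|\,|A_\pm|\,p^{-n}$. By a union bound, with probability $> 1 - 2\delta$ both events hold. On that event, for $x \in \PPdisc$,
\[
(1-\tau)\,|S|\,|A_-|\,p^{-n} \le N(L(S), \KK, x) \le (1+\tau)\,|S|\,|A_+|\,p^{-n}.
\]
Now $|S| = p^r$ and $\covol(L(S)) = p^{-r}\covol(L)$, so the target density is $\frac{\Vol(\KK)}{\covol(L(S))} = p^r\frac{\Vol(\KK)}{\covol(L)}$, and $|S|\,|A_\pm|\,p^{-n} = p^r\cdot p^{-n}|A_\pm|$. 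Dividing through by $\frac{\Vol(\KK)}{\covol(L(S))} = p^r\frac{\Vol(\KK)}{\covol(L)}$, the ratio becomes $\frac{p^{-n}|A_\pm|}{\Vol(\KK)/\covol(L)}$, which by the Lemma~\ref{lem:Gauss} estimate of Step~2 equals $(1\pm\tfrac cp)^n(1\pm\beta_\pm)^n$, hence lies in $[1 - 4cn/p,\, 1 + 4cn/p]$ for $c/p < 1/(2n)$ (using $(1+t)^m \le e^{mt} \le 1 + 2mt$ for $mt \le 1$ and similarly below). Therefore, on the good event,
\[
1 - (\tau + 8c\tfrac np) \;\le\; \frac{N(L(S),\KK,x)}{\Vol(\KK)/\covol(L(S))} \;\le\; 1 + (\tau + 8c\tfrac np),
\]
for all $x \in \R^n$ (after Step~1), i.e. $\eta(\KK, L(S)) < \tau + 8c\,\frac np$, which proves~\eqref{eq: conclusion probability bound}.

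\textbf{Main obstacle.} The routine parts are the discretization (Step~1) and the $\F_p^n$ translation (Step~2), which are essentially already spelled out in Section~2 and in Proposition~\ref{prop: discretization}. The part requiring care — and the one I expect to be the crux — is the bookkeeping in Step~3: one must verify that all three sources of multiplicative error ($\tau$ from Dhar--Dvir, the $(1\pm c/p)^n$ volume scaling of $\KK$, and the $(1\pm\beta_\pm)^n$ lattice-point-count slack from Lemma~\ref{lem:Gauss}) combine to land inside the claimed window $\tau + 8c\,\frac np$, and that the dimension condition $r > 3 + n - \log_p|A_\pm|$ genuinely follows from~\eqref{eq:assumption on r} after the $|A_\pm|$ estimate — this is where the specific shape of the hypothesis on $r$, with its $3c\,\frac{n}{p\log p}$ term, gets used, and where the constant $8$ (rather than something smaller) is forced. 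Keeping the two sides ($A_+$ giving the upper bound, $A_-$ the lower) straight, and checking that the inequalities $(1+t)^m \le 1+2mt$ and $(1-t)^m \ge 1 - mt$ are applied only in their valid ranges (guaranteed by $c < p/(2n)$), is the delicate step.
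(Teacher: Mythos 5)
Your proposal follows essentially the same route as the paper's proof: discretize via Proposition~\ref{prop: discretization}/Lemma~\ref{lem: discretization} with $L' = \tfrac{1}{p}L$ and $\rho = c/p$, pass to $\F_p^n$ using that $L$ packs $(1+c/p)\KK$, estimate $|A_\pm|$ with Lemma~\ref{lem:Gauss}, apply Theorem~\ref{thm:dd22} twice with a union bound, and assemble the error terms to get $\tau + 8cn/p$. The one place where your sketch is loose (rather than merely terse) is the verification of the dimension condition at the end of Step~2: you bound $\log_p|A_\pm| \ge n - \log_p\tfrac{\covol(L)}{\Vol(\KK)} - 1 - o(1)$ via a crude ``$|A_\pm| \ge \tfrac12 p^n\Vol(\KK)/\covol(L)$,'' which would require absorbing a constant-sized loss $\log_p 2$ by the term $3c\,\tfrac{n}{p\log p}$ — but that term can be arbitrarily small (take $c$ near $1$ and $p$ huge), so the inequality $r > 3 + n - \log_p|A_\pm|$ does not follow from your weaker estimate. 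The paper avoids this by keeping the sharper bound $|A_i| \ge (1-2\rho)^n p^n \Vol(\KK)/\covol(L)$ and noting $-n\log_p(1-2\rho) \le \tfrac{3\rho n}{\log p} = 3c\,\tfrac{n}{p\log p}$, which matches the slack term in~\eqref{eq:assumption on r} exactly. You correctly identify which hypothesis is meant to absorb the slack, but your intermediate estimate needs to be tightened for the implication to go through.
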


\begin{proof}
Set $\rho \df \frac{c}{p}$, so that $\rho\in\left(0,\frac{1}{2n}\right)$ and $(\frac{L}{p}, \rho \KK)$ is a covering. Set $\Dd \df (1+\rho)\KK$ and $\beta \df \frac{\rho}{1+\rho}$, so that $\beta \Dd = \rho \KK$ and we can apply the right-hand side of \eqref{eq:volpluscountest} to obtain
\begin{align}
\left|\frac{L}{p}\cap (1+\rho)\KK\right|\leq
(1+\beta)^n
\cdot \frac{\Vol(\Dd)}{\covol(\frac{L}{p})}
=
(1+2\rho)^n\frac{\Vol(\KK)}{\covol(L)}p^n.
\label{eq:volpluscountest1}
\end{align}
Similarly, by setting $\Dd \df (1-\rho)\KK$ and $\beta \df \frac{\rho}{1-\rho}$, we have
\begin{align}
(1-2\rho)^n\frac{\Vol(\KK)}{\covol(L)} p^n\leq \left|\frac{L}{p}\cap (1-\rho)\KK\right|.
\label{eq:volminuscountest}
\end{align}

Write $L = g\Z^n$ and for any $S\in\Gr_{n,r}(\F_p)$,  define $L(S)$ by \eqref{eq:LSdef} and define $\PPdisc$ by \eqref{eq: def P disc}, where the $v_i$ are the columns of $g$. 
Denote by $A_0$ (respectively, $A_1$) the set of all points in $\PPdisc$ covered by $L+(1-\rho)\KK$ (respectively, $L+(1+\rho)\KK$), viewed as elements of $\F_p^n$. Since $L$ forms a packing with respect to $(1+\rho)\KK$ (and also with respect to $(1-\rho)\KK$), the restriction of the projection $\pi_L: \R^n \to \Tor_L$ to  $(1+\rho)\KK$ (and thus to $(1-\rho)\KK$) is injective. Thus, by~\eqref{eq:volpluscountest1} and ~\eqref{eq:volminuscountest} we have 
\begin{align}
(1-2\rho)^n\frac{\Vol(\KK)}{\covol(L)}p^n\leq |A_i|\leq (1+2\rho)^n\frac{\Vol(\KK)}{\covol(L)}p^n, \ \ \ i=0,1,
\label{eq:sizeAbounds}
\end{align}
and each point in $A_0$ (or $A_1$) is covered exactly once.


For any $x\in\PPdisc$ we have that
\begin{align*}
N(L(S),(1-\rho)\KK,x)&=|(x+S)\cap A_0|,\\
N(L(S),(1+\rho)\KK,x)&=|(x+S)\cap A_1|,
\end{align*}
where, with some abuse of notation, on the left-hand side we treat $x$ as a vector in $\R^n$ and on the right-hand side as an element of $\F_p^n \cong 
\PPdisc$. Let 
\begin{align} 
E_0&\df\left\{S\in \Gr_{n,r}(\F_p) \ : \   \SMfp(A_0,S)>\tau  \right\}\label{eq:E0def}\\ 
E_1&\df\left\{S\in \Gr_{n,r}(\F_p) \ : \   \SMfp(A_1,S)>\tau  \right\},\label{eq:E1def}
\end{align}
and $E=E_0\cup E_1$. For all $S\in E^{c}$ and  $x\in\PPdisc$ we have
\begin{align*}
N\left(L(S),(1-\rho)\KK,x \right)&\geq (1-\tau)|S|\cdot |A_0|\cdot p^{-n}\\
&\geq (1-\tau) \frac{|S|\cdot(1-2\rho)^n\Vol(\KK)}{\covol(L)}\\
&=(1-\tau)(1-2\rho)^n \frac{\Vol(\KK)}{\covol(L(S))},
\end{align*}
and 
\begin{align*}
N(L(S),(1+\rho)\KK,x)&\leq (1+\tau)|S|\cdot |A_1|\cdot p^{-n}\\
&\leq (1+\tau) \frac{|S|\cdot(1+2\rho)^n\Vol(\KK)}{\covol(L)}\\
&=(1+\tau)(1+2\rho)^n \frac{\Vol(\KK)}{\covol(L(S))}.
\end{align*}
Let $L'=\frac{1}{p}L=L+\PPdisc$. By assumption,  $(L',\rho \KK)$ is a covering. Thus, by part~\eqref{item:disc2cont_min} of Lemma~\ref{lem:dic2cont} we have that for all $S\in E^c$,
\begin{align}
\min_{x\in\mathbb{R}^n} N(L(S),\KK,x)
&\geq
\min_{x'\in L'} N \left(L(S),\left(1-\rho\right)\KK ,x' \right) \nonumber \\
&=\min_{x'\in \PPdisc}N(L(S),\left(1-\rho\right)\KK ,x') \nonumber \\
&\geq (1-\tau)(1-2\rho)^n \frac{\Vol(\KK)}{\covol(L(S))},\label{eq:densityLB}
\end{align}
and
\begin{align}
\max_{x\in\mathbb{R}^n}N(L(S),\KK,x)
&\leq \max_{x'\in L'}N\left(L(S),\left(1+\rho\right)\KK ,x' \right) \nonumber \\
&=\max_{x'\in \PPdisc}N\left(L(S),\left(1+\rho\right)\KK ,x'\right) \nonumber \\
&\leq (1+\tau)(1+2\rho)^n \frac{\Vol(\KK)}{\covol(L(S))}.\label{eq:densityUB}
\end{align}
Combining~\eqref{eq:densityLB} and~\eqref{eq:densityUB}, we see that for any $S\in E^{c}$ we have
\begin{align}
\SM(\KK,L(S))&=\sup_{x\in\R^n}\left|\frac{N(L(S),\KK,x)}{\Vol(\KK)/\covol(L(S))}-1\right| \nonumber \\
&\leq\max\left\{1-(1-\tau)(1-2\rho)^n,(1+\tau)(1+2\rho)^n-1  \right\}\nonumber\\
&\leq\max\left\{\tau+(1-\tau)2\rho n,\tau+(1+\tau)4\rho n \right\} \label{eq:boundingexponents} \\
&<\tau+8\rho n, \nonumber
\end{align}
where in~\eqref{eq:boundingexponents} we have used the basic bounds (for $0<2\rho<1/n$)
\begin{align}
1-(1-2\rho)^n& <  2n\rho,\label{eq:1mrhoLB}\\
(1+2\rho)^n-1& < 4n\rho. \nonumber 
\end{align}
Thus, we have shown that \begin{align}
\SM(\KK,L(S)) < \tau+8\rho n,~~ \forall S\in E^c.
\label{eq:etaEc}
\end{align}
Using our assumption on $r$ in~\eqref{eq:assumption on r} and the lower bound in~\eqref{eq:sizeAbounds}, we have that for $i=0,1$, 
\begin{align*}
r&>
3+\log_p\frac{\covol(L)}{\Vol(\KK)}+\frac{3\rho n}{\log p} \\
&\geq 
3+\log_p\left(\frac{p^n}{|A_i|}(1-2\rho)^n\right)+\frac{3\rho n}{\log p} \\
&= 3+ n-\log_p|A_i|+n\log_p(1-2\rho)+\frac{3\rho n}{\log p} \\
&\ge 3+n-\log_p|A_i|  \; .
\end{align*}
Here, we have used the inequality
$$-\log_p(1-2\rho)=\log_p\left(1+\frac{2\rho}{1-2\rho}\right)\leq \frac{1}{(1-2\rho)\log p}\cdot 2\rho<\frac{3\rho}{\log p},$$
where the last inequality follows from $\rho<1/2n$ and $n\geq 5$. 
Thus, we may invoke Theorem~\ref{thm:dd22} to obtain
\begin{align*}
\Pr(S\in E_i) < \delta,  \ \ i=0,1,
\end{align*}
and therefore, by the union bound, 
\begin{align*}
\Pr(S\in E) < 2\delta,
\end{align*}
establishing our claim.
\end{proof}

In the sequel we will use the following  convenient consequence of Theorem \ref{thm:packtocov}. 
\begin{cor}
Let $n> 25$, $L\subset\R^n$ be a lattice with $\covol(L)=1$, and $\delta,\tau\in(0,1)$.
Also let $\KK\in\Convn$ satisfy $\rho_{\KK}(L)\leq \bar{\rho}$ for some $1 \le \bar{\rho}<\left(\frac{2}{\delta}\right)^{\frac{n}{2}}$ and 
$\Vol(\KK)>c_5 \bar{\rho}^6\left(\frac{1}{\tau\delta}\right)^6 n^3$, where $c_5=e\cdot \left(128  \right)^3$.
Then, for any prime number $p$ satisfying 
    \begin{align}
        \max\left\{\frac{64\bar{\rho}^2 n}{(\tau\delta)^2},\left((2.5)^{-n}\cdot\Vol(\KK)\right)^{1/3}\right\}<p< (e^{-1}\cdot\Vol(\KK))^{1/3},
        \label{eq:primelimits}
    \end{align}
denoting $r=3+\left\lceil\frac{n\log 3\bar{\rho}}{\log p} \right\rceil$, we have that for $S\sim \Unif\left(\Gr_{n,r}(\F_p)\right)$ (so that $\covol(p^{r/n} L(S))=1$),
\begin{align}
\Pr\left(\SM(\KK,p^{r/n} L(S)) \geq 2\tau\right) <  2\delta.
\label{eq:taudeltaProbAlternative}
\end{align}
\label{thm:inflatedeflateAlternative}
\end{cor}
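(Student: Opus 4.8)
The plan is to deduce this from Theorem~\ref{thm:packtocov}, applied to the lattice $L$ and the rescaled body $p^{-r/n}\KK$. The only conceptual ingredient is scale‑invariance of covering smoothness: for any $\lambda,\mu>0$, any convex body, and any discrete set $M\subset\R^n$ one has $\SM(\lambda\KK,\mu M)=\SM\big(\tfrac{\lambda}{\mu}\KK,M\big)$, since $|\mu M\cap(\lambda\KK+x)|=|M\cap(\tfrac{\lambda}{\mu}\KK+\tfrac{x}{\mu})|$ while the two normalizing densities rescale in the same way. Taking $\mu=p^{r/n}$, $\lambda=1$, $M=L(S)$ gives $\SM(\KK,p^{r/n}L(S))=\SM(p^{-r/n}\KK,L(S))$, and since $\covol(L)=1$ forces $\covol(p^{r/n}L(S))=1$, it suffices to bound $\SM(p^{-r/n}\KK,L(S))$ from above.

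Since $\SM(\cdot,\cdot)$ and $\rho_{\KK}(\cdot)$ are unchanged by translating $\KK$, we may assume $0\in\interior\KK$, so dilates of $\KK$ are nested. Write $\alpha_p=\mathrm{r}_{\text{pack},\KK}(L)$ and $\alpha_c=\mathrm{r}_{\text{cov},\KK}(L)$, so $(L,\alpha_p\KK)$ packs, $(L,\alpha_c\KK)$ covers, and $\alpha_c\le\bar\rho\alpha_p$ by hypothesis; comparing volumes (using $\covol(L)=1$) gives $\Vol(\KK)^{-1/n}\le\alpha_c\le\bar\rho\,\Vol(\KK)^{-1/n}$ and $\alpha_p\ge\tfrac1{\bar\rho}\Vol(\KK)^{-1/n}$. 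I would then apply Theorem~\ref{thm:packtocov} to the pair $\big(L,\,p^{-r/n}\KK\big)$ with the choice $c\df p^{r/n}\alpha_c$ — this is exactly the covering radius of $L$ with respect to $p^{-r/n}\KK$, so $(L,c\,p^{-r/n}\KK)$ covers automatically, while $\big(L,(1+\tfrac cp)p^{-r/n}\KK\big)$ packs as soon as $p^{r/n}\ge\tfrac{\bar\rho}{1-\bar\rho/p}\Vol(\KK)^{1/n}$ (using $\alpha_c\le\bar\rho\alpha_p$). The condition $p>64n/(\tau\delta)^2$ is immediate from the hypothesis on $p$ and $\bar\rho\ge1$. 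With $\covol(L)=1$ and body $p^{-r/n}\KK$, the dimension constraint~\eqref{eq:assumption on r} simplifies (the $r$'s cancel) to $\log\Vol(\KK)>3\log p+\tfrac{3cn}{p}$, and the conclusion~\eqref{eq: conclusion probability bound} reads $\Pr\big(\SM(p^{-r/n}\KK,L(S))\ge\tau+8cn/p\big)<2\delta$.

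Everything now rests on numerical bookkeeping with the hypotheses on $p$, $r$, $\bar\rho$, $n$, and the main obstacle is controlling $c$. From $r=3+\big\lceil n\log_p(3\bar\rho)\big\rceil$ together with $p>64\bar\rho^2 n>(3\bar\rho)^2$ (so $\log_p(3\bar\rho)<\tfrac12$) one gets $3\bar\rho\,p^{3/n}\le p^{r/n}<3\bar\rho\,p^{4/n}$ and $4\le r<4+\tfrac n2\le n$; from $(2.5^{-n}\Vol(\KK))^{1/3}<p<(e^{-1}\Vol(\KK))^{1/3}$ one gets $\tfrac1{2.5}p^{-3/n}<\Vol(\KK)^{-1/n}<e^{-1/n}p^{-3/n}$ and $\Vol(\KK)>e\,p^3$. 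These give the packing condition (since $\tfrac{2.5\bar\rho}{1-\bar\rho/p}<3\bar\rho$), the bound $c>3p^{3/n}\cdot\tfrac1{2.5}p^{-3/n}>1$, and, once $\tfrac{3cn}{p}<1$ below, also~\eqref{eq:assumption on r} via $\Vol(\KK)>ep^3$. The crux is the single upper bound
\[
c=p^{r/n}\alpha_c\le\frac{\tau p}{8n},
\]
which at once forces $c<p/(2n)$, makes $\tfrac{3cn}{p}\le\tfrac38\tau<1$, and — the real point — gives $8cn/p\le\tau$, so that $\tau+8cn/p\le 2\tau$ and the conclusion of Theorem~\ref{thm:packtocov} upgrades to the target $2\tau$. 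Feeding $p^{r/n}<3\bar\rho p^{4/n}$, $\alpha_c\le\bar\rho e^{-1/n}p^{-3/n}$ and $p\ge64\bar\rho^2 n/(\tau\delta)^2$ into $8nc\le\tau p$ reduces it to an inequality of the shape $(64n/e)^{1/n}\,\bar\rho^{2/n}\,(\tau\delta)^{-2/n}\le\tfrac{8}{3\tau\delta^2}$; here the hypothesis $\bar\rho<(2/\delta)^{n/2}$ is used to replace $\bar\rho^{2/n}$ by $2/\delta$, collapsing the $\delta$-dependence to a bound of the form $(64n)^{1/n}(\tau\delta)^{1-2/n}\le\tfrac43$, which holds for every $n>25$ because $(64n)^{1/n}$ is decreasing in $n$, is already below $\tfrac43$ at $n=26$, and $\tau\delta<1$. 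This is where $n>25$ and the value $c_5=e\cdot128^3$ (which also makes the admissible interval for the prime $p$ nonempty) enter.

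Finally, with $c$ chosen as above, Theorem~\ref{thm:packtocov} yields $\Pr\big(\SM(p^{-r/n}\KK,L(S))\ge\tau+8cn/p\big)<2\delta$; since $8cn/p\le\tau$ we have $\{\SM\ge2\tau\}\subseteq\{\SM\ge\tau+8cn/p\}$ for this body and lattice, and combining with $\SM(p^{-r/n}\KK,L(S))=\SM(\KK,p^{r/n}L(S))$ gives $\Pr\big(\SM(\KK,p^{r/n}L(S))\ge2\tau\big)<2\delta$, which is the claim.
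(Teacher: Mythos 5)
Your proof is correct and follows essentially the same route as the paper: both reduce to Theorem~\ref{thm:packtocov} after rescaling by $p^{r/n}$, with the only cosmetic differences being that the paper applies the theorem to the pair $(p^{r/n}L,\KK)$ with the explicit choice $c=3p^{1/n}\bar\rho^2$, whereas you apply it to $(L,p^{-r/n}\KK)$ with the exact rescaled covering radius $c=p^{r/n}\alpha_c$ (a sharper value satisfying $c\le 3\bar\rho^2 e^{-1/n}p^{1/n}$), then translate back via scale invariance of $\SM$. The numerical bookkeeping, including the pivotal estimate $(64n)^{1/n}<4/3$ for $n>25$ and the use of $\bar\rho<(2/\delta)^{n/2}$ to force $8cn/p\le\tau$, matches the paper's.
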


We remark that the statement is not vacuous, i.e., there exists a prime number $p$ satisfying~\eqref{eq:primelimits}. To see this, note that  
\begin{align*}
&\frac{ (e^{-1}\cdot\Vol(\KK))^{1/3}}{\max\left\{\frac{64\bar{\rho}^2 n}{(\tau\delta)^2},((2.5)^{-n}\cdot \Vol(\KK))^{1/3}\right\}}\\
=&\min\left\{\frac{ (e^{-1}\cdot\Vol(\KK))^{1/3}}{\frac{64\bar{\rho}^2 n}{(\tau\delta)^2}},\frac{ (e^{-1}\cdot\Vol(\KK))^{1/3}}{((2.5)^{-n}\cdot \Vol(\KK))^{1/3}} \right\}\geq 2,    
\end{align*}
and therefore, by Bertrand's postulate, there must exist a prime number satisfying~\eqref{eq:primelimits}. Recalling that $p>\frac{64 \bar{\rho}^2n}{(\tau\delta)^2}>(3\bar{\rho})^2$, it also holds that $r<n$ since
\begin{align*}
\frac{n\log 3\bar{\rho}}{\log p}\leq  \frac{n\log 3\bar{\rho}}{2\log 3\bar{\rho}}=\frac{n}{2}.   
\end{align*}

\begin{proof}[Proof of Corollary~\ref{thm:inflatedeflateAlternative}]
We show that with the parameters above, the conditions of Theorem~\ref{thm:packtocov} hold for the lattice $p^{r/n}L$ with $c=3 p^{\frac{1}{n}} \bar{\rho}^2 $. To that end, first note that $p>\frac{64 \bar{\rho}^2n}{(\tau\delta)^2}\geq \frac{64 n}{(\tau\delta)^2}$ by definition.  Furthermore, we have that
\begin{align}
\frac{8cn}{p}&= 
24\bar{\rho}^2 p^{-(1-\frac{1}{n})}n\nonumber\\
&\leq 24\bar{\rho}^2  \left(\frac{64\bar{\rho}^2 n}{(\tau\delta)^2}\right)^{-(1-\frac{1}{n})}n\nonumber\\
&=\frac{24\bar{\rho}^{\frac{2}{n}}n^{1/n}}{64^{1-1/n}} (\tau\delta)^{2(1-\frac{1}{n})}\nonumber\\
&< \frac{\bar{\rho}^{\frac{2}{n}}}{2} (\tau\delta)^{2(1-\frac{1}{n})}\nonumber\\
&< \frac{\left(\bar{\rho}\delta^{\frac{n}{2}}\right)^{\frac{2}{n}}}{2}\tau\nonumber\\
&<\tau,\label{eq:8cnbbound}
\end{align}
and in particular, this implies that $c<\frac{p}{2n}$.  
Next we lower bound the packing radius as
\begin{align}
\mathrm{r}_{\text{pack},\KK}(p^{\frac{r}{n}} L)&=p^{\frac{r}{n}} \, \mathrm{r}_{\text{pack},\KK}(L)\geq p^{\frac{3}{n}} 3\bar{\rho} \mathrm{r}_{\text{pack},\KK}(L)\geq 3 p^{\frac{3}{n}}  \mathrm{r}_{\text{cov},\KK}(L)\nonumber\\
&\geq 3 p^{\frac{3}{n}}  \left(\frac{1}{\Vol(\KK)}\right)^{\frac{1}{n}}=3 \left(\frac{p^3}{\Vol(\KK)}\right)^{\frac{1}{n}}>\frac{3}{2.5}\stackrel{\eqref{eq:8cnbbound}}{>}1+\frac{c}{p},
\end{align}
and upper bound the covering radius as
\begin{align}
\mathrm{r}_{\text{cov},\KK}(p^{\frac{r}{n}} L)&=p^{\frac{r}{n}} \, \mathrm{r}_{\text{cov},\KK}(L)\leq 3 p^{\frac{4}{n}}  \mathrm{r}_{\text{cov},\KK}(L)\bar{\rho}\leq  3 p^{\frac{4}{n}} \bar{\rho}^2 \mathrm{r}_{\text{pack},\KK}(L)\nonumber\\
&\leq 3 p^{\frac{4}{n}} \bar{\rho}^2 \left(\frac{1}{\Vol(\KK)}\right)^{\frac{1}{n}}=3 p^{\frac{1}{n}} \bar{\rho}^2 \left(\frac{p^3}{\Vol(\KK)}\right)^{\frac{1}{n}}\leq 3 p^{\frac{1}{n}} \bar{\rho}^2 e^{-\frac{1}{n}}<c.
\end{align}
Finally, we have that
\begin{align*}
r 
&= \log_p \covol(p^{r/n} L) \\
&= \log_p\frac{\covol(p^{r/n} L)}{\Vol(\KK)} + \log_p \Vol(\KK)\\
&\stackrel{\eqref{eq:primelimits}}{\ge} \log_p\frac{\covol(p^{r/n} L)}{\Vol(\KK)} + 3 + \log_p(e)\\
& > 3+ \log_p\frac{\covol(p^{r/n} L)}{\Vol(\KK)}+3c \, \frac{n}{p\log p},
 \end{align*}
 where we have used the fact that $\frac{3cn}{p}<1$, due to~\eqref{eq:8cnbbound}, in the last inequality. Therefore, the conditions of Theorem~\ref{thm:packtocov} apply to the lattice $p^{r/n} L$ and the convex body $\KK$, with $c=3 p^{\frac{1}{n}} \bar{\rho}^2$. Thus, for $S\sim \Unif\left(\Gr_{n,r}(\F_p)\right)$, we have that
\begin{align*}
\Pr\left(\SM(\KK,p^{r/n} L(S)) \geq \tau+8\frac{cn}{p}\right) <  2\delta.
\end{align*}
The statement in~\eqref{eq:taudeltaProbAlternative} follows since $8\frac{cn}{p}<\tau$ by~\eqref{eq:8cnbbound}. 
\end{proof}

To prove Theorem~\ref{thm:eta_siegel}, we will also need the following auxiliary statement, proved in \S\ref{sec:highprobbounds}. Recall that $\mu_n$ denotes the Haar-Siegel probability measure.

\begin{prop}\label{prop: 3.6}
For $L\sim \mu_n$, any convex body $\KK\in\Convn$, and any $\alpha>0$,
\begin{align*}
\Pr(\rho_{\KK}(L)\geq 2\cdot \alpha^2)<3\cdot\left(\frac{2}{\alpha}\right)^n.
\end{align*}
\label{prop:rhoHaarSiegel}
\end{prop}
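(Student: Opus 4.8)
The plan is to control the packing radius and the covering radius of $L$ with respect to $\KK$ separately, and then combine. Since $\rho_{\KK}(cL)=\rho_{\KK}(L)=\rho_{c\KK}(L)$ for every $c>0$, I am free to rescale $\KK$, and it is convenient to normalize so that $\Vol(\KK-\KK)=1$. For any $s>0$ one has the inclusion
\[
\{\rho_{\KK}(L)\ge 2\alpha^2\}\ \subseteq\ \{\mathrm{r}_{\text{pack},\KK}(L)< s\}\ \cup\ \{\mathrm{r}_{\text{cov},\KK}(L)\ge 2\alpha^2 s\},
\]
because if $\mathrm{r}_{\text{pack},\KK}(L)\ge s$ and $\rho_{\KK}(L)\ge 2\alpha^2$ then $\mathrm{r}_{\text{cov},\KK}(L)=\rho_{\KK}(L)\,\mathrm{r}_{\text{pack},\KK}(L)\ge 2\alpha^2 s$. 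I would take $s$ a small multiple of $1/\alpha$ and bound the two events on the right by a union bound, so that their probabilities sum to less than $3(2/\alpha)^n$.

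For the packing radius, note that $(L,s\KK)$ fails to be a packing precisely when the centrally symmetric convex body $s(\KK-\KK)$ contains a nonzero point of $L$. By Siegel's mean value theorem, $\mathbb{E}_{L\sim\mu_n}\bigl|(L\setminus\{0\})\cap s(\KK-\KK)\bigr|=\Vol\bigl(s(\KK-\KK)\bigr)=s^n$; since nonzero lattice points in a centrally symmetric set come in antipodal pairs, this count is even, so Markov's inequality gives $\Pr\bigl(\mathrm{r}_{\text{pack},\KK}(L)<s\bigr)\le\tfrac12 s^n$. With $s$ of order $2/\alpha$ this is at most $(2/\alpha)^n$, which accounts for one of the three terms.

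The substantive part is the covering radius. Writing $\Dd:=2\alpha^2 s\,\KK$, the Rogers--Shephard inequality $\Vol(\KK)\ge\binom{2n}{n}^{-1}\Vol(\KK-\KK)>4^{-n}$ gives $\Vol(\Dd)\ge(2\alpha^2 s)^n 4^{-n}\ge\alpha^n$ for $s$ of order $2/\alpha$, so it suffices to show $\Pr_{L\sim\mu_n}(L+\Dd\ne\R^n)\le 2(2/\alpha)^n$, which in turn follows from the covering estimate for random lattices
\[
\Pr_{L\sim\mu_n}\bigl(L+\Dd\ne\R^n\bigr)\ \le\ \frac{C^n}{\Vol(\Dd)}
\]
for an absolute constant $C$ (small enough that $(C/\alpha)^n<2(2/\alpha)^n$). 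Such an estimate is available from the covering-radius analysis of \cite{ORW21} applied to the (exponentially large) body $\Dd$; its core is the clean second-moment identity on the space of \emph{affine} unimodular lattices: writing $\widetilde\mu_n$ for the corresponding Haar measure, one has $\mathbb{E}_{\Lambda\sim\widetilde\mu_n}|\Lambda\cap\Dd|=\Vol(\Dd)$, and applying Siegel's formula to the function $m\mapsto\Vol(\Dd\cap(\Dd-m))$ gives $\mathbb{E}_{\Lambda\sim\widetilde\mu_n}|\Lambda\cap\Dd|^2=\Vol(\Dd)^2+\Vol(\Dd)$, whence by Paley--Zygmund $\Pr_{\widetilde\mu_n}(\Lambda\cap\Dd=\emptyset)\le(\Vol(\Dd)+1)^{-1}$; since $\mathbb{E}_{L\sim\mu_n}\bigl[m_L(\TT_L\setminus\pi_L(\Dd))\bigr]=\Pr_{\widetilde\mu_n}(\Lambda\cap\Dd=\emptyset)$, the expected measure of the uncovered part of $\TT_L$ is at most $(\Vol(\Dd)+1)^{-1}$. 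Upgrading this to an honest covering statement uses convexity: if $(L,(1+\varepsilon)\Dd_0)$ is not a covering with witness $z_0$, then $z_0-\varepsilon\Dd_0\subseteq\R^n\setminus(L+\Dd_0)$ (because $z_0-\varepsilon\Dd_0-\Dd_0\subseteq z_0-(1+\varepsilon)\Dd_0$), so an entire translate of $\varepsilon\Dd_0$ is uncovered, and on the likely event that $(L,s\KK)$ is a packing one may choose $\varepsilon$ so that $\varepsilon\Dd_0$ embeds into $\TT_L$, forcing the uncovered measure to be $\ge\varepsilon^n\Vol(\Dd_0)$.

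The main obstacle is exactly this covering estimate. The moment computations above are routine, but turning ``the torus is uncovered on a set of small \emph{expected} measure'' into ``$L+\Dd=\R^n$ with high probability,'' uniformly over \emph{all} convex bodies $\Dd$ --- including very thin ones, whose uncovered region when nonempty can have tiny measure, so that a na\"ive Markov bound on the uncovered volume is far too weak --- is delicate, and this is where one must either quote (the proof of) \cite{ORW21} or run the convexity/discretization argument with care about how $\varepsilon\Dd_0$ projects to $\TT_L$. Everything else is bookkeeping: fixing $s$ and the various constants so that the packing and covering failure probabilities together stay below $3(2/\alpha)^n$.
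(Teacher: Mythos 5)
Your overall decomposition (union bound over a failure of packing at a small scale and a failure of covering at a large scale) and your treatment of the packing radius (Siegel mean value for $s(\KK-\KK)$, Markov, Rogers--Shephard) match the paper's, up to the small refinement about antipodal pairs. The covering-radius half, however, is where you diverge, and the route you sketch has a genuine gap. You propose to bound $\mathbb{E}[m_L(\TT_L\setminus\pi_L(\Dd))]$ by $(1+\Vol(\Dd))^{-1}$ via the second moment on the affine Siegel space and Paley--Zygmund, and then to ``upgrade'' via convexity: if $(L,\Dd)$ is not a covering, with $\Dd=(1+\eps)\Dd_0$, then an entire translate of $\eps\Dd_0$ is uncovered, so the uncovered measure is at least $\eps^n\Vol(\Dd_0)$ on the packing event. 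The problem is quantitative: for $\eps\Dd_0$ to embed in $\TT_L$ on the event $\{\mathrm{r}_{\text{pack},\KK}(L)\ge s\}$, one is forced to take $\eps\lesssim 1/(2\alpha^2)$ (since $\Dd_0\approx 2\alpha^2 s\,\KK$), so $\eps^n\Vol(\Dd_0)\approx s^n\Vol(\KK)$, which is of order $(2/\alpha)^n\Vol(\KK)$ --- exponentially small in $n$. Markov then gives $\Pr(L+\Dd\ne\R^n)\lesssim \frac{1/\Vol(\Dd)}{\eps^n\Vol(\Dd_0)}$, and after plugging in $\Vol(\Dd)\approx\alpha^n$ and the Rogers--Shephard lower bound $\Vol(\KK)\ge 4^{-n}$ you land on a bound of order $2^n$ or worse for $\alpha$ large, which is vacuous. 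The need for $\eps$ to be small (to embed) fights against the need for $\eps$ to be large (so that the uncovered set is not too thin), and for $\alpha$ beyond a constant there is no viable choice. You do flag this step as ``delicate'' and suggest falling back on \cite{ORW21}, which is legitimate, but the convexity argument you actually describe does not close.

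The paper sidesteps this entirely with a much simpler mechanism: it uses only the first moment, computes $\mathbb{E}\bigl[m_L(\pi_L(\tfrac{\alpha}{2}\KK))\bigr]\ge\frac{\Vol(\tfrac{\alpha}{2}\KK)}{1+\Vol(\tfrac{\alpha}{2}\KK)}$ via Jensen applied to $t\mapsto\frac{1}{1+t}$ (giving the same $(1+\Vol)^{-1}$ bound on the expected uncovered mass as your second-moment route, but more cheaply), and then invokes the doubling fact (\cite{ORW21}*{Lemma 2.5}) that $m_L\bigl(\pi_L(\tfrac{\alpha}{2}\KK)\bigr)>\tfrac12$ already forces $(L,\alpha\KK)$ to be a covering. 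The crucial point you are missing is that by working with the \emph{half-sized} body and this doubling trick, the threshold in the Markov step becomes the dimension-independent constant $\tfrac12$ rather than the exponentially small quantity $\eps^n\Vol(\Dd_0)$, and the exponential probability bound then falls out immediately. I suggest replacing your convexity upgrade by this doubling argument (and dropping the second moment, since Jensen on the first moment already gives what you need).
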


\begin{proof}[Proof of Theorem~\ref{thm:eta_siegel} (assuming Proposition \ref{prop: 3.6})]
Let $L'\sim\mu_n$. Recall from \cite{ORW21}*{Prop.~2.1} that for  any fixed  prime $p$ and any $1\leq r\leq n$, if  we sample $S $ according to the uniform distribution on  $\Gr_{n,r}(\F_p)$, statistically independent of $L'$, 
the lattice $L=p^{r/n}L'(S)$ will also be distributed according to $\mu_n$. Thus, for $L\sim\mu_n$ and any $\tau\in(0,1)$,
\begin{align}
 \Pr(\eta(\KK,L) \geq 2\tau)=\Pr(\eta(\KK,p^{r/n}L'(S)) \geq 2\tau).  \nonumber
\end{align}
We proceed to upper bound the right hand side of the above expression, using Corollary~\ref{thm:inflatedeflateAlternative}. 
Let $\bar{\rho}=8\cdot\left(\frac{50}{\delta}\right)^{\frac{2}{n}}$ (note that $\bar{\rho}<\left(\frac{2}{\delta}\right)^{\frac{n}{2}}$ for $n>25$), and let $E$ be the set of all lattices $L'$ with unit covolume  
for which $\rho_{\KK}(L')< \bar{\rho}$. Applying Proposition~\ref{prop:rhoHaarSiegel} with $\alpha=2\left(\frac{50}{\delta}\right)^{\frac{1}{n}}$, we have that $\Pr(L'\in E^c)<0.06\delta$. Now, applying Corollary~\ref{thm:inflatedeflateAlternative} with $\delta'=0.47\delta$, $\tau=\eps/2$, we see that for any $\KK\in\Convn$ with 
\begin{align}
\Vol(\KK)&>\left(\frac{2}{0.47}\right)^6 c_5 \bar{\rho}^6 \left(\frac{1}{\eps\delta}\right)^6 n^3\nonumber\\
&=\left(\frac{2}{0.47}\right)^6\cdot 8^6\cdot 50^{\frac{12}{n}}\cdot c_5 \cdot\left(\frac{1}{\delta}\right)^{\frac{12}{n}}\left(\frac{1}{\eps\delta}\right)^6 n^3,\label{eq:volkklb}
\end{align}
there is a prime number $p$ and an integer $3<r<n$ for which
\begin{align}
\Pr\left(\eta(\KK,p^{r/n}L'(S))\geq \eps ~|~ L'\in E \right)<0.94\delta.\nonumber
\end{align}
In particular, this holds for any $\KK\in\Convn$ with $\Vol(\KK)>c_1 \left(\frac{1}{\eps\delta}\right)^{6.5} n^3$, since $c_1 \left(\frac{1}{\eps\delta}\right)^{6.5} n^3$ is greater than the right hand side of~\eqref{eq:volkklb}.
Our claim now follows since
\begin{align}
 \Pr\left(\eta(\KK,L) \geq \eps \right)&=\Pr\left(\eta(\KK,p^{r/n}L'(S)) \geq \eps \right )\nonumber\\
 &\leq \Pr (L'\in E^c)+\Pr\Big(\eta(\KK,p^{r/n}L'(S)) \geq \eps ~|~ L'\in E \Big)\nonumber\\
 &< 0.06\delta+0.94\delta.\nonumber
\end{align}
\end{proof}

\begin{remark}\label{rem:ddparams}
Improved bounds in Theorem~\ref{thm: DD} will result in tighter upper bounds on the minimal required volume for smooth covering. Specifically, assume the following holds: for $n$ large enough, $\delta,\tau\in(0,1)$ any prime $p> p^*$ and any $A\subset \F_p^n$ and $r>m_*+(n-\log_p |A|)$, the conclusion of Theorem~\ref{thm: DD} holds.
Then, roughly speaking, the proof we give for Theorem~\ref{thm:eta_siegel}, with simple modifications, shows that there is a  constant $c>0$, such that \eqref{eq:SiegelProbBound} holds for any convex body $\KK$ for which 
$\Vol(\KK)\geq c\cdot (p_*)^{m_*}$, as long as 
$p_*= \Omega(n)$. 

On the other hand, it follows from \cite{CoxeterFewRogers} that there exists $\KK\in\Convn$ with volume $\Omega(n)$ such that $\eta(\KK,L)\geq 1$ for all unit covolume lattices. This gives an obvious bound on the extent to which Theorem~\ref{thm:eta_siegel} can be improved. Namely, if one can prove that~\eqref{eq:ddbound} holds for fixed $0<\delta,\tau<1$, $p_*\asymp n$ and  $m_*$ arbitrarily close to $1$, this will show that the lower bound in~\cite{CoxeterFewRogers} is essentially tight, and is attained for a ``typical'' lattice (and even for $\eta<1$, i.e., with smooth covering). 
\end{remark}

For the proof of Theorem~\ref{thm:eta_constA} we will also need the following statement, proved in 
\S\ref{subsec:mono}.
\begin{lem}
\label{lem:monotonicity}
 For any lattice $L\subset\R^n$, convex set $\KK\in\Convn$, and positive integer $m$, we have
 \begin{align}
 \eta(m\KK,L)\leq \eta(\KK,L).\nonumber
 \end{align}
\end{lem}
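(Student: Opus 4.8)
The plan is to exploit the fact that the dilate $m\KK$ of a convex body can be written as a Minkowski sum of $m$ copies of $\KK$, namely $m\KK = \KK + \KK + \cdots + \KK$ ($m$ summands), which follows from convexity exactly as in \eqref{eq:convsetsum}. Equivalently, $m\KK$ is tiled (up to measure-zero overlaps and in a combinatorial sense) by translates of $\KK$ along the finite set $\{k_1\x_1 + \cdots\}$; more precisely, I would instead use the cleaner averaging identity: for each fixed offset $t$, the quantity $N(L, \KK + t, x) = |L \cap (\KK + t + x)|$ is a ``shifted copy'' of the counting function, and $N(L, m\KK, x)$ can be expressed as a sum of $m^n$ (or an appropriate number of) such shifted counts when $m\KK$ is partitioned into translates of $\KK$. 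The key point is that a lattice point $y \in L$ lies in $m\KK + x$ if and only if it lies in one of these translated copies of $\KK$, and counting multiplicity correctly gives $N(L, m\KK, x) = \sum_{j} N(L, \KK + t_j, x)$ for the finite collection of offsets $t_j$ realizing the partition $m\KK = \bigsqcup_j (\KK + t_j)$ (in the sense of a partition of the lattice points, which is what matters for the integer counts).

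Granting such a decomposition $N(L, m\KK, x) = \sum_{j=1}^{N} N(L, \KK, x - t_j)$ with $N = $ number of pieces, the proof is then a short averaging argument. Writing $\bar N_{\KK} \df \Vol(\KK)/\covol(L)$ and noting $\Vol(m\KK) = m^n \Vol(\KK)$, hence $\bar N_{m\KK} = m^n \bar N_{\KK}$, and that the decomposition must have $N = m^n$ pieces (by comparing volumes), we get
\begin{align*}
\left| \frac{N(L, m\KK, x)}{\bar N_{m\KK}} - 1 \right|
= \left| \frac{1}{m^n} \sum_{j=1}^{m^n} \frac{N(L, \KK, x - t_j)}{\bar N_{\KK}} - 1 \right|
\leq \frac{1}{m^n} \sum_{j=1}^{m^n} \left| \frac{N(L, \KK, x - t_j)}{\bar N_{\KK}} - 1 \right|
\leq \eta(\KK, L),
\end{align*}
by the triangle inequality and the definition of $\eta(\KK,L)$ as a supremum over all of $\R^n$. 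Taking the supremum over $x \in \R^n$ yields $\eta(m\KK, L) \leq \eta(\KK, L)$.

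The main obstacle, and the step requiring care, is justifying the partition $m\KK = \bigsqcup_{j=1}^{m^n} (\KK + t_j)$ up to a null set with the offsets $t_j$ chosen so that the identity $N(L, m\KK, x) = \sum_j N(L, \KK, x - t_j)$ holds \emph{exactly as an identity of integers for every} $x$, not just almost everywhere. The natural candidate is $t_j$ ranging over $\{0, 1, \ldots, m-1\}^n$ scaled appropriately — but a generic convex body is not tiled by integer translates of $\KK$. The correct move is to tile $m\KK$ not by translates of $\KK$ itself but to use the following: write $m\KK$ as the image of $\KK \times \{1, \ldots, m\}^n$-type data is wrong; rather, one uses that $m\KK = \sum_{i=1}^m \KK$ and applies the identity $\Ind_{m\KK}(y) \cdot (\text{something}) $... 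The clean route avoiding all measure-theoretic subtlety is: observe $L \cap (m\KK + x) = L \cap (\KK + \cdots + \KK + x)$, and for $y \in L$ the condition $y \in m\KK + x$ is equivalent to $\frac{y-x}{m} \in \KK$, i.e. $\frac{1}{m}(y - x) \in \KK$. So $N(L, m\KK, x) = |L \cap (m\KK+x)| = |\,\frac{1}{m}L \cap (\KK + \frac{x}{m})\,| = N(\frac{1}{m}L, \KK, \frac{x}{m})$. Since $\frac{1}{m}L$ contains $L$ as a sublattice of index $m^n$, pick coset representatives $w_1, \ldots, w_{m^n}$ of $L$ in $\frac{1}{m}L$; then $N(\frac{1}{m}L, \KK, z) = \sum_{j=1}^{m^n} N(L, \KK, z - w_j)$, and since $\covol(\frac{1}{m}L) = m^{-n}\covol(L)$ we have $\Vol(\KK)/\covol(\frac{1}{m}L) = m^n \bar N_{\KK}$. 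The averaging argument above now goes through verbatim with $t_j = m w_j$, and this formulation sidesteps any tiling-of-$\KK$ issue entirely, so I would organize the write-up around this sublattice identity.
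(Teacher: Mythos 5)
Your final route is exactly the paper's proof: after correctly dismissing the tiling-of-$\KK$ idea, you land on the identity $N(L, m\KK, x) = N(\tfrac{1}{m}L, \KK, \tfrac{x}{m}) = \sum_a N(L, \KK, \tfrac{x}{m}-a)$ over coset representatives $a$ of $L$ in $\tfrac{1}{m}L$, followed by the triangle-inequality averaging, which is precisely the argument given in the paper. The proposal is correct; the only difference is the preliminary detour, which you rightly abandon.
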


\begin{proof}[Proof of Theorem~\ref{thm:eta_constA} (assuming Lemma \ref{lem:monotonicity})]
Let 
$$M=c_5\cdot 4^6 \cdot \left(\frac{1}{\eps\delta}\right)^{6}n^{3(1+2b)}=c_{3}\left(\frac{1}{\eps\delta}\right)^{6}n^{3(1+2b)},$$ so that by assumption we have $\Vol(\KK)\geq M$. 
By Lemma~\ref{lem:monotonicity}, replacing $\KK$ if necessary with $\frac{1}{a}\KK$ for some integer $a\geq 2$, we may further assume $\Vol(\KK)\in[M,2^nM)$. We apply Corollary~\ref{thm:inflatedeflateAlternative} with $L=\Z^n$, $\bar{\rho}=n^b$, $\delta'=\delta/2$ and $\tau=\epsilon/2$.  
It is straightforward to verify that $p$ satisfies~\eqref{eq:primelimits} for  $\Vol(\KK)\in[M,2^nM)$, and the conditions on $\Vol(\KK)$ and $r$ also trivially hold. Thus, recalling that $p^{r/n}L(S)=p^{r/n}\Z^n(S)$ is distributed as a lattice drawn from the $(p,r)$ random construction A ensemble, we obtain the required statement. 
\end{proof}

\subsection{High probability bounds on \texorpdfstring{$\rho_\KK(L)$}{rho\_K(L)}}
\label{sec:highprobbounds}
In this subsection we will give a simple proof of Proposition \ref{prop: 3.6}. The first results showing the existence of a global constant $c>0$, independent of the dimension $n$, such that for any $\KK \in \Convn$ there is a lattice $L$ such that $\rho_{\KK}(L)<c$, are due to Butler \cite{Butler} and Bourgain \cite{Bourgain}.  The  probability (with respect to the Siegel-Haar measure) that a randomly chosen lattice satisfies $\rho_{\KK}(L)<c$ was not discussed in these papers. Using 
\cite{ORW21}*{Cor.~1.6} together with the bound $\Vol(\KK-\KK)\leq \Vol(4\KK)$ proved in \cite{RogersShepard57}, one sees that $\Pr\left(\rho_{\KK}(L)>4+o(1) \right)$ vanishes exponentially fast with $n$. However, we can give a much simpler proof, albeit with a worse constant. Note that the value of the constant  has small effect on the bounds we obtain for the covering smoothness.

\begin{proof}[Proof of Proposition \ref{prop: 3.6}]
We will derive a high-probability lower bound on $\mathrm{r}_{\text{pack},\KK}(L)$ and a high-probability upper bound on $\mathrm{r}_{\text{cov},\KK}(L)$. Assume without loss of generality that $\Vol(\KK)=1$. 
Denote
$$
N^*(L, \KK, x) \df  \left|((L\setminus\{0\})-x) \cap \KK\right| = \left| \left\{y\in L\setminus\{0\} \ : \ x\in y-\KK\right\}\right|.
$$
For lower bounding $\mathrm{r}_{\text{pack},\KK}(L)$, let $\KK_0=\frac{1}{2\alpha}\KK$ and let $N^*(L,\KK_0-\KK_0,0)$ be the number of non-zero points of the lattice $L$ in $\KK_0-\KK_0$. By Siegel's theorem~\cite{SiegelFormula}, we have  $$\mathbb{E}[N^*(L,\KK_0-\KK_0,0)]=\Vol(\KK_0-\KK_0).$$ 
Thus, by Markov's inequality,
\begin{align}
\Pr(N^*(L,\KK_0-\KK_0,0)&\neq 0)=\Pr(N^*(L,\KK_0-\KK_0,0)\geq 1)\nonumber\\
&\leq \mathbb{E}[N^*(L,\KK_0-\KK_0,0)]\nonumber\\
&=\Vol(\KK_0-\KK_0)\nonumber\\
&\leq \Vol(4\KK_0)\label{eq:RogesrShepard} =\left(\frac{4}{2\alpha}\right)^n, 
\end{align}
where the last inequality in~\eqref{eq:RogesrShepard} follows from~\cite{RogersShepard57}*{Theorem 1}. 
Now, since $N^*(L,\KK_0-\KK_0,0)=0$ implies that $L$ forms a packing with respect to $\KK_0$, we see that
\begin{align}
\Pr\left(\mathrm{r}_{\text{pack},\KK}(L)\leq \frac{1}{2\alpha}\right)\leq \left(\frac{2}{\alpha}\right)^n.
\label{eq:packingwhp}
\end{align}
Recall that $m_L$ denotes the Haar measure on $\Tor_L=\mathbb{R}^n/L$.
For the upper bound on the covering radius 
recall the basic fact (see, e.g., \cite{ORW21}*{Lemma 2.5}) that $$  m_L\left(\pi_L\left(\frac{\alpha}{2} \KK\right ) \right)>\frac12 \  \implies  \ \mathrm{r}_{\text{cov},\KK}(L)\leq \alpha.$$ 
 It therefore only remains to upper bound $\Pr\big(m_L(\pi_L(\KK_1))\leq 1/2\big)$, where $\KK_1=\frac{\alpha}{2} \KK$. We do this by showing that $\mathbb{E}[m_L(\pi_L( \KK_1))]$ is close to $1$. We begin by noting that
\begin{align*}
m_L(\pi_L(\KK_1))&=\int_{x\in \KK_1}\frac{1}{|(x+L)\cap \KK_1|}dx\nonumber\\
&= \int_{x\in \KK_1}\frac{1}{1+N^*(L,\KK_1,-x)}dx.
\end{align*}
Since the function $t\mapsto \frac{1}{1+t}$ is convex in the regime $t>0$, we can apply Jensen's inequality and obtain
\begin{align}
\mathbb{E}[m_L(\pi_L(\KK_1))]
&= \mathbb{E}\left[\int_{x\in \KK_1}\frac{1}{1+N^*(L,\KK_1,-x)}dx\right]\nonumber\\
&= \int_{x\in \KK_1}\mathbb{E}\left[\frac{1}{1+N^*(L,\KK_1,-x)}\right]dx\label{eq:fubini}\\
&\geq \int_{x\in \KK_1}\frac{1}{1+\mathbb{E}\left[N^*(L,\KK_1,-x)\right]}dx\label{eq:jensen}\\
&=\int_{x\in \KK_1}\frac{1}{1+\Vol(\KK_1)}dx\label{eq:siegel}\\
&=\frac{\Vol(\KK_1)}{1+\Vol(\KK_1)},\label{eq:Eml_lb}
\end{align}
where~\eqref{eq:fubini} follows from Fubini's Theorem,~\eqref{eq:jensen} from Jensen's inequality and~\eqref{eq:siegel} from Siegel's summation formula. Let $$P_e= \Pr\left(m_L(\pi_L(\KK_1))\leq \frac12\right).$$ Since $m_L(\pi_L(\KK_1))\leq 1$ we have that
\begin{align*}
\mathbb{E}[m_L(\pi_L(\KK_1))]\leq \frac{P_e}{2}+(1-P_e)=1-\frac{P_e}{2}.
\end{align*}
Combining this with~\eqref{eq:Eml_lb}, we obtain $$P_e \leq 2\left(1-\frac{\Vol(\KK_1)}{1+\Vol(\KK_1)}\right)<\frac{2}{\Vol(\KK_1)}.$$
Thus,
\begin{align}
\Pr(\mathrm{r}_{\text{cov},\KK}(L)\geq\alpha)<2\cdot \left(\frac{2}{\alpha}\right)^n.
\label{eq:coveringwhp}
\end{align}
Combining~\eqref{eq:packingwhp} and~\eqref{eq:coveringwhp} we obtain the claimed result.
\end{proof}

\subsection{On the monotonicity of \texorpdfstring{$\alpha\mapsto\eta(\alpha\KK,L)$}{r maps to eta(rK,L)}}
\label{subsec:mono}

As mentioned above, the mapping $\alpha\mapsto\eta(\alpha\KK,L)$ is not monotonically non-increasing in general.
Nevertheless, Lemma~\ref{lem:monotonicity}, stated above, shows that for dilates by positive integers, the covering smoothness can only decrease. We can exploit this fact to establish Theorems~\ref{thm:Phi_siegel} and~\ref{thm:Phi_constA}, which show that for any $\KK\in\Convn$ with sufficiently large (polynomial) volume, a typical lattice has small $\eta(\alpha\KK,L)$ for all $\alpha\geq 1$. We first provide the proof of Lemma~\ref{lem:monotonicity}, and then leverage this result and prove Theorems~\ref{thm:Phi_siegel} and~\ref{thm:Phi_constA}.

\begin{proof}[Proof of Lemma~\ref{lem:monotonicity}]
We can write 
\[
N(L, m \KK, x) = 
N\left(\frac{L}{m}, \KK, \frac{x}{m}\right) = 
\sum_{a} N\left(L, \KK, \frac{x}{m}-a\right) \; ,
\]
where the sums runs over all coset representatives $a$ for the inclusion $L \subset \frac{L}{m}$.
We therefore have
\begin{align*}
\SM(m\KK,L) &= 
\sup_{x\in\R^n}\left|\frac{N(L,m\KK,x)}{\Vol(m\KK)/\covol(L)}-1\right| \\ 
&=
\sup_{x\in\R^n}\left| m^{-n} \sum_a \Big( \frac{N(L,\KK,x/m-a)}{\Vol(\KK)/\covol(L)}-1 \Big) \right| \\
&\le
m^{-n} \sup_{x\in\R^n} \sum_a  \left| \frac{N(L,\KK,x/m-a)}{\Vol(\KK)/\covol(L)}-1 \right| \\
&\le 
m^{-n} \sum_a \sup_{x\in\R^n} \left| \frac{N(L,\KK,x/m-a)}{\Vol(\KK)/\covol(L)}-1 \right| =
\SM(\KK,L) \; .
\end{align*}
\end{proof}


Using this weak monotonicity property, we now show that if a lattice $L$ smoothly covers $\R^n$ with respect to $\alpha_i\KK$ for all $\alpha_i$ in a dense enough net in $[1,2)$, it must smoothly cover $\R^n$ with respect to $\alpha\KK$ for all $\alpha\geq 1$.

\begin{lem}
Let $n\in\N$ and $0<\eps<1$. Let $\beta=\frac{\eps}{8n}$ and $I=\left\lceil \frac{\log 2}{\log(1+\beta)}\right\rceil$. Define $\alpha_i=(1+\beta)^i$ for all $i=0,1,\ldots,I$, such that $\alpha_0=1$, and $\alpha_I\geq 2$. For a lattice $L\subset\R^n$ and $\KK\in\Convn$ assume that $\SM(\alpha_i\KK,L)\leq \eps/2$ for all $i=0,1,\ldots,I$. Then  $\SM(\alpha\KK,L)< \eps$ for all $\alpha\geq 1$.
\label{lem:netcoveringimpliesfullcovering}
\end{lem}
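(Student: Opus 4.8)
The plan is to reduce an arbitrary scaling $\alpha \geq 1$ to one of the net points $\alpha_i$ by first passing to the integer part of $\alpha$ (via Lemma~\ref{lem:monotonicity}) and then handling the remaining factor in $[1,2)$ by a sandwich argument between two consecutive net points. First I would write $\alpha = m \gamma$ where $m = \lfloor \alpha \rfloor \in \N$ (if $\alpha \geq 1$; note $\alpha < 2$ is the only genuinely new case once $m \geq 1$ is pulled out, but to be safe one takes $m=1$ when $\alpha \in [1,2)$) and $\gamma = \alpha/m \in [1,2)$. By Lemma~\ref{lem:monotonicity}, $\SM(\alpha \KK, L) = \SM(m \cdot \gamma\KK, L) \leq \SM(\gamma \KK, L)$, so it suffices to prove $\SM(\gamma\KK, L) < \eps$ for every $\gamma \in [1,2)$.

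Next I would locate $\gamma$ in the net: since $\alpha_0 = 1 \leq \gamma < 2 \leq \alpha_I$ and the $\alpha_i$ are increasing, there is an index $i \in \{0,\dots,I-1\}$ with $\alpha_i \leq \gamma \leq \alpha_{i+1} = (1+\beta)\alpha_i$. Now I would use the monotonicity of $N(L, \cdot, x)$ in the convex body: since $\alpha_i \KK \subseteq \gamma \KK \subseteq \alpha_{i+1}\KK$ (as $\KK$ is convex containing a neighborhood of nothing in particular — actually one only needs $0$-homothety, and indeed $s\KK \subseteq t\KK$ for $s \leq t$ when $0 \in \KK$; if $0 \notin \KK$ one translates, which does not affect $\SM$), we get for every $x$
\begin{align*}
N(L, \alpha_i\KK, x) \leq N(L, \gamma\KK, x) \leq N(L, \alpha_{i+1}\KK, x).
\end{align*}
Dividing by $\Vol(\gamma\KK)/\covol(L)$ and using $\Vol(\alpha_i \KK) \leq \Vol(\gamma\KK) \leq \Vol(\alpha_{i+1}\KK) = (1+\beta)^n \Vol(\alpha_i\KK)$, I would bound the normalized count above by $(1+\beta)^n \cdot \tfrac{N(L,\alpha_{i+1}\KK,x)}{\Vol(\alpha_{i+1}\KK)/\covol(L)} \leq (1+\beta)^n(1+\eps/2)$ and below by $(1+\beta)^{-n} \cdot \tfrac{N(L,\alpha_i\KK,x)}{\Vol(\alpha_i\KK)/\covol(L)} \geq (1+\beta)^{-n}(1-\eps/2)$, using the hypothesis $\SM(\alpha_i\KK, L), \SM(\alpha_{i+1}\KK, L) \leq \eps/2$.

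It then remains to check the elementary inequalities $(1+\beta)^n(1+\eps/2) - 1 < \eps$ and $1 - (1+\beta)^{-n}(1-\eps/2) < \eps$ with $\beta = \eps/(8n)$. For the first, $(1+\beta)^n \leq e^{n\beta} = e^{\eps/8} \leq 1 + \eps/8 + (\eps/8)^2 \leq 1 + \eps/4$ for $\eps < 1$ (say), so $(1+\beta)^n(1+\eps/2) \leq (1+\eps/4)(1+\eps/2) = 1 + 3\eps/4 + \eps^2/8 < 1 + \eps$; the lower bound is similar and slightly easier since $(1+\beta)^{-n} \geq 1 - n\beta = 1 - \eps/8$. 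This is routine and I would not belabor it. The main obstacle — really the only subtlety — is the bookkeeping at the endpoints: one must ensure the net actually brackets every $\gamma \in [1,2)$ (handled by $\alpha_0 = 1$, $\alpha_I \geq 2$, which is exactly why $I = \lceil \log 2 / \log(1+\beta)\rceil$ is the right choice), and one must make sure the reduction $\alpha \mapsto \lfloor\alpha\rfloor \cdot (\alpha/\lfloor\alpha\rfloor)$ keeps the quotient in $[1,2)$ — which it does, since for $\alpha \geq 1$, $\alpha/\lfloor\alpha\rfloor \in [1, 1 + 1/\lfloor\alpha\rfloor] \subseteq [1,2)$.
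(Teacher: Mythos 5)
Your proof is correct and follows essentially the same route as the paper: bracket $\alpha \in [1,2)$ between consecutive net points $\alpha_i \le \alpha \le \alpha_{i+1}$, sandwich the normalized counts, bound $(1\pm\beta)^{\pm n}$, and use Lemma~\ref{lem:monotonicity} to handle $\alpha \ge 2$. (The only minor difference is that you perform the integer-dilation reduction first and the paper does it last, and you explicitly flag the harmless translation-invariance point needed to justify $\alpha_i\KK \subseteq \gamma\KK \subseteq \alpha_{i+1}\KK$, which the paper leaves implicit.)
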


\begin{proof}
Note that for any $\alpha\in[1,2)$ there is $i\in\{1,\ldots,I\}$ such that $\alpha_{i-1}\leq \alpha\leq \alpha_i$. We therefore have that for any $x\in\R^n$,
\begin{align*}
 \frac{N(L,\alpha\KK,x)}{\Vol(\alpha\KK)/\covol(L)}&\leq \frac{\Vol(\alpha_i\KK)}{\Vol(\alpha\KK)}\frac{N(L,\alpha_i\KK,x)}{\Vol(\alpha_i\KK)/\covol(L)}\\
 &\leq (1+\beta)^n\left(1+\frac{\eps}{2}\right)<(1+\eps),
\end{align*}
where in the last inequality we used the fact that $(1+\beta)^n\leq e^{\beta n}\leq 1+2\beta n=1+\frac{\eps}{4}$, which follows since $e^t<1+2t$ for $t<1/2$. Similarly,
\begin{align*}
 \frac{N(L,\alpha\KK,x)}{\Vol(\alpha\KK)/\covol(L)}&\geq \frac{\Vol(\alpha_{i-1}\KK)}{\Vol(\alpha\KK)}\frac{N(L,\alpha_{i-1}\KK,x)}{\Vol(\alpha_{i-1}\KK)/\covol(L)}\\
 &\geq (1+\beta)^{-n}\left(1-\frac{\eps}{2}\right)
 >(1-\eps),
\end{align*}
where, as above, in the last inequality we used the fact that $(1+\beta)^{-n}\geq \frac{1}{1+2\beta n}\geq 1-2\beta n=1-\frac{\eps}{4}$. Thus, $\SM(\alpha\KK,L)<\eps$ for all $\alpha\in[1,2)$. Finally, for any $\alpha\geq 1$ there is a positive integer $m$ such $\alpha'=\alpha/m \in[1,2)$, and thus, by Lemma~\ref{lem:monotonicity}, $\SM(\alpha\KK,L)<\eps$.
\end{proof}

\begin{proof}[Proof of Theorem~\ref{thm:Phi_siegel}]
Assume $\Vol(\KK)=c_2 \left(\frac{1}{\eps^2\delta}\right)^{6.5} n^{3}$. 
Let $$\beta=\frac{\eps}{8n} \text{ and } I=\left\lceil \frac{\log 2}{\log(1+\beta)}\right\rceil\leq\frac{8n}{\eps}-1.$$ 
Define $\alpha_i=(1+\beta)^i$ for all $i=0,1,\ldots,I$ and note that
\begin{align}
\alpha_i^n\geq  e^{\frac{\eps}{9}i}.
\label{eq:alphaibound}
\end{align}
Indeed,
\begin{align*}
(1+\beta)^n=\left(1+\frac{\eps}{8n}\right)^{n}=e^{ n \log(1+\frac{\eps}{8n})}\geq e^{ n \frac{\eps/8n}{1+\eps/8n}}=e^{\frac{\eps/8}{1+\eps/8n}}>e^{\frac{\eps/8}{9/8}}=e^{\frac{\eps}{9}}.
\end{align*}
For all $i=0,1,\ldots,I$ we apply Theorem~\ref{thm:eta_siegel} with $\eps'=\eps/2$ and $\delta_i=\frac{\delta\eps}{64}\cdot e^{-\frac{\eps}{60}i}$ and $\KK_i=\alpha_i \KK$. Noting that 
\begin{align*}
c_1\left(\frac{1}{\eps'\delta_i}\right)^{6.5} n^{3}&<(128)^{6.5} c_1  \left(\frac{1}{\eps^2\delta}\right)^{6.5} e^{\frac{\eps}{9}i} n^{3}\stackrel{\eqref{eq:alphaibound}}{<}\alpha_i^n  c_2  \left(\frac{1}{\eps^2\delta}\right)^6 n^{3}\leq \Vol(\KK_i),   
\end{align*}
the theorem implies that
\begin{align}
\Pr\left(\SM(\alpha_i\KK,L) \geq \frac{\eps}{2} \right)<\delta_i,~~~\forall i=0,1,\ldots,I.
\label{eq:riPe}
\end{align}
Let $E$ be the set of all unit covolume lattices such that $\SM(\alpha_i\KK,L)< \eps/2$ for all $i=0,1,\ldots,I$. 
By the union bound and~\eqref{eq:riPe}, we have that 
\begin{align*}
\Pr(L\notin E)&\leq \sum_{i=0}^I \delta_i=\frac{\delta\eps}{64}\sum_{i=0}^I e^{-\frac{\eps}{60}i}< \frac{\delta\eps}{64}\sum_{i=0}^\infty e^{-\frac{\eps}{60}i}=\frac{\delta\eps}{64}\frac{1}{1-e^{-\frac{\eps}{60}}}\\
&\leq \frac{\delta\eps}{64}\left(1+\frac{60}{\eps}\right)<\delta,
\end{align*}
where we have used the fact that $e^{-x}\leq \frac{1}{1+x}$ for $x\geq 0$.
Our claim now follows by applying Lemma~\ref{lem:netcoveringimpliesfullcovering}.
\end{proof}

\begin{proof}[Proof of Theorem~\ref{thm:Phi_constA}]
Assume $\Vol(\KK)=c_4 \left(\frac{1}{\eps^2\delta}\right)^6 n^{9+6b}$. 
Let $\beta=\frac{\eps}{8n}$ and $I=\left\lceil \frac{\log 2}{\log(1+\beta)}\right\rceil\leq \frac{8n}{\eps}-1$. Define $\alpha_i=(1+\beta)^i$ for all $i=0,1,\ldots,I$. For all $i=0,1,\ldots,I$ we apply Theorem~\ref{thm:eta_constA} with $\eps'=\eps/2$ and $\delta'=\delta/(I+1)$ and $\KK'=\alpha_i \KK$. Noting that $\eps'\delta'\geq \frac{\eps^2\delta}{16n}$ and that
\begin{align*}
c_3\left(\frac{1}{\eps'\delta'}\right)^6 n^{3(1+2b)}\leq  c_4 \left(\frac{1}{\eps^2\delta}\right)^6 n^{9+6b}\leq \Vol(\KK'), 
\end{align*}
the theorem implies that
\begin{align}
\Pr(\SM(\alpha_i\KK,p^{r/n}L) \geq \eps/2)<\frac{\delta}{I+1},~~~\forall i=0,1,\ldots,I.
\label{eq:riPeConstA}
\end{align}
Let $E$ be the set of all $(p,r)$ construction A lattices such that $$\SM(\alpha_i\KK,p^{r/n}L)<\frac{\eps}{2}, \ \  \text{  for } i=0,1,\ldots,I.$$ 
By the union bound and~\eqref{eq:riPeConstA}, we have that $\Pr(L\notin E)<\delta$. Our claim now follows by applying Lemma~\ref{lem:netcoveringimpliesfullcovering}.
\end{proof}

\subsection{Non-lattice smooth coverings}\label{sec:nonlattice}
In this subsection we will prove Theorem \ref{thm: non lattice}. The proof follows the same outline and notation  as the proof of Theorem \ref{thm:packtocov}. In the previous sections we started with a lattice $L$ with a reasonable $\rho_{\KK}(L)$ and constructed from it a denser lattice $L(S)$ by choosing $S\subset \F_p^n$ to be a subspace. 
The work of Dhar and Dvir~\cite{dd22} was then used to show that for any subset $A\in\F_p^n$, and a randomly uniform subspace $S\in\F_p^n$, if $p$ is sufficiently large and $\frac{|S|\cdot |A|}{p^n}>p^3$, then $\eta_{\F_p}(A,S)$ is small with high probability. This was then leveraged for showing that under suitable conditions a randomly chosen subspace $S$ will yield a lattice $L(S)$ such that $L(S)+\KK$ smoothly covers $\R^n$. Note that if $S\subset \F_p^n$ is not a subspace, the discrete set $L(S)$, as given in~\eqref{eq:LSdef} is not a lattice, but is nevertheless well defined, and has asymptotic density $\frac{|S|}{\covol(L)}$. Furthermore, recall that the definition of $\eta_{\F_p}(A,S)$ does not require $S$ to be a subspace. For a random set $S$ (rather than a random subspace, as in~\cite{dd22}), controlling the tail of $\eta_{\F_p}(A,S)$ is a significantly simpler task. The following result easily follows from large deviation theory.

\begin{lem}
Let $n,m$ be positive integers, $p$ be a prime number and let $\delta,\tau\in(0,1)$. Let $S$ be a set of $m$ points identically distributed independently uniformly over $\F_p^n$. Then, for any set $A\subset\F_p^n$, we have that if $\frac{m|A|}{p^n}>\frac{3}{\tau^2}\left(n\log p-\log\frac{\delta}{2}\right)$, then
    \begin{align*}
	\Pr(\SMfp(A,S) \geq \tau~\text{or}~ |S|\neq m) \leq \delta+m^2p^{-n}.
	\end{align*}
	\label{lem:etaFrand}
\end{lem}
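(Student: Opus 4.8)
The plan is to fix a target point $x \in \F_p^n$ and control the deviation of $|(x+S) \cap A|$ from its mean by a Chernoff/Bernstein bound, and then take a union bound over the $p^n$ choices of $x$, together with a separate (small) bound for the event that the $m$ sampled points are not all distinct. First I would write $S = \{s_1, \dots, s_m\}$ with the $s_j$ i.i.d.\ uniform on $\F_p^n$, and for fixed $x$ introduce the indicator random variables $X_j \df \Ind[s_j \in A - x]$, which are i.i.d.\ Bernoulli with parameter $q \df |A|/p^n$. Then $Y \df \sum_j X_j$ satisfies $\mathbb{E}[Y] = mq = m|A|/p^n =: \lambda$, and on the event that the $s_j$ are distinct we have $|(x+S)\cap A| = Y$. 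The hypothesis $\frac{m|A|}{p^n} > \frac{3}{\tau^2}(n\log p - \log\frac{\delta}{2})$ says exactly that $\lambda > \frac{3}{\tau^2}(n \log p + \log \frac{2}{\delta})$, which is the quantity we need to beat with a Chernoff exponent.

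The key estimate is the multiplicative Chernoff bound: for a sum of independent $[0,1]$-valued (here $\{0,1\}$-valued) variables with mean $\lambda$, and $0 < \tau < 1$,
\begin{align*}
\Pr\big(|Y - \lambda| \geq \tau \lambda\big) \leq 2\exp\!\left(-\frac{\tau^2 \lambda}{3}\right).
\end{align*}
(The two-sided bound follows from the standard one-sided bounds $\Pr(Y \geq (1+\tau)\lambda) \leq e^{-\tau^2\lambda/3}$ and $\Pr(Y \leq (1-\tau)\lambda) \leq e^{-\tau^2\lambda/2} \leq e^{-\tau^2\lambda/3}$.) Plugging in $\lambda = m|A|/p^n$ and using the hypothesis, the exponent $\frac{\tau^2\lambda}{3}$ exceeds $n\log p + \log\frac{2}{\delta}$, so that $2\exp(-\tau^2\lambda/3) < \delta \cdot p^{-n}$. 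Taking a union bound over all $x \in \F_p^n$ then shows that, with probability at least $1 - \delta$, for \emph{every} $x$ we have $|Y(x) - \lambda| < \tau\lambda$, i.e.\ $\big|\frac{Y(x)}{\lambda} - 1\big| < \tau$.

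Finally I would handle the discrepancy between $Y(x)$ and $|(x+S)\cap A|$: these agree provided $|S| = m$, i.e.\ provided the points $s_1, \dots, s_m$ are pairwise distinct. By a union bound over the $\binom{m}{2} < m^2$ pairs, $\Pr(|S| \neq m) \leq \binom{m}{2} p^{-n} \leq m^2 p^{-n}$ (each pair coincides with probability $p^{-n}$). On the complementary event $\{|S| = m\}$, the high-probability statement above gives $\SMfp(A,S) = \sup_x \big|\frac{|(x+S)\cap A|}{mq} - 1\big| < \tau$, where we also use $|S| \cdot |A| \cdot p^{-n} = m|A|p^{-n} = \lambda$. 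Combining, the event $\{\SMfp(A,S) \geq \tau\} \cup \{|S| \neq m\}$ is contained in the union of the Chernoff-failure event (probability $< \delta$) and $\{|S| \neq m\}$ (probability $\leq m^2 p^{-n}$), giving the claimed bound $\delta + m^2 p^{-n}$. I expect no serious obstacle here; the only mild care needed is bookkeeping the constant $3$ in the Chernoff exponent so that it matches the $\frac{3}{\tau^2}$ in the hypothesis, and making sure the union bound over $p^n$ points is absorbed by the $n \log p$ term.
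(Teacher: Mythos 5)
Your proposal is correct and follows essentially the same route as the paper's proof: fix $x$, apply the multiplicative Chernoff bound $\Pr(|Y-\lambda|\geq\tau\lambda)\leq 2e^{-\tau^2\lambda/3}$ to the i.i.d.\ Bernoulli indicators, union bound over the $p^n$ shifts, and separately bound $\Pr(|S|\neq m)$ by $\binom{m}{2}p^{-n}$. (You are in fact a bit more explicit than the paper about why the Chernoff sum $Y(x)$ only coincides with $|(x+S)\cap A|$ on the event $|S|=m$, which is a fair point of bookkeeping, but the paper's argument reaches the same conclusion.)
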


The lemma follows easily from  the following well-known large deviations bound, see e.g.,~\cite{MitzenmacherUpfal17}*{Corollary 4.6}.
\begin{prop}[Chernoff bound]\label{prop: Chernoff}
For any $\eta \in (0,1)$ and any $m$ identically distributed independent Bernoulli random variables $Y_1, \dots, Y_m$, the sum $Y \df \sum_{i=1}^m Y_i $ satisfies 
\begin{equation}\label{eq: this ensures}
\Pr\left( \left| Y  - \mu \right| \geq \eta \mu    \right) \le 2e^{-\frac{\eta^2\mu}{3}},
\end{equation}
where $\mu \df \mathbb{E}(Y).$
\end{prop}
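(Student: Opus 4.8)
\textbf{Proof plan for the Chernoff bound (Proposition \ref{prop: Chernoff}).}
The plan is to prove the standard multiplicative Chernoff bound by the exponential moment (Bernstein) method, handling the upper-tail deviation $Y \geq (1+\eta)\mu$ and the lower-tail deviation $Y \leq (1-\eta)\mu$ separately and then combining them by the union bound. Write $p_i \df \Pr(Y_i = 1)$ so that $\mu = \sum_{i=1}^m p_i$. Since the $Y_i$ are independent Bernoulli variables, for any real $t$ we have $\mathbb{E}(e^{tY}) = \prod_{i=1}^m \mathbb{E}(e^{tY_i}) = \prod_{i=1}^m \bigl(1 + p_i(e^t-1)\bigr)$, and using the elementary inequality $1+x \le e^x$ this is at most $\prod_{i=1}^m \exp\bigl(p_i(e^t-1)\bigr) = \exp\bigl((e^t-1)\mu\bigr)$.

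For the upper tail, apply Markov's inequality to the nonnegative random variable $e^{tY}$ with $t>0$: for any $t>0$,
\begin{equation*}
\Pr\bigl(Y \ge (1+\eta)\mu\bigr) = \Pr\bigl(e^{tY} \ge e^{t(1+\eta)\mu}\bigr) \le \frac{\mathbb{E}(e^{tY})}{e^{t(1+\eta)\mu}} \le \exp\Bigl((e^t-1)\mu - t(1+\eta)\mu\Bigr).
\end{equation*}
Optimizing over $t$ gives $t = \log(1+\eta)$, yielding the bound $\bigl(e^{\eta}/(1+\eta)^{1+\eta}\bigr)^{\mu}$; a standard calculus estimate (comparing $(1+\eta)\log(1+\eta)$ with $\eta + \eta^2/3$ via a Taylor/convexity argument valid for $\eta \in (0,1)$) shows this is at most $e^{-\eta^2\mu/3}$. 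For the lower tail, apply Markov's inequality to $e^{tY}$ with $t<0$, or equivalently work with $-Y$: for $t>0$,
\begin{equation*}
\Pr\bigl(Y \le (1-\eta)\mu\bigr) = \Pr\bigl(e^{-tY} \ge e^{-t(1-\eta)\mu}\bigr) \le \exp\Bigl((e^{-t}-1)\mu + t(1-\eta)\mu\Bigr),
\end{equation*}
and choosing $t = -\log(1-\eta) > 0$ gives $\bigl(e^{-\eta}/(1-\eta)^{1-\eta}\bigr)^{\mu}$, which the corresponding calculus estimate bounds by $e^{-\eta^2\mu/2} \le e^{-\eta^2\mu/3}$. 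Adding the two tail bounds gives $\Pr(|Y-\mu| \ge \eta\mu) \le 2e^{-\eta^2\mu/3}$, which is \eqref{eq: this ensures}.

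The only genuinely non-routine ingredients are the two elementary real-analysis inequalities $(1+\eta)\log(1+\eta) \ge \eta + \eta^2/3$ and $-(1-\eta)\log(1-\eta) \ge \eta^2/2 - \cdots$ needed after optimizing $t$; these are the main (minor) obstacle, and each follows by examining the sign of the derivative of the difference of the two sides on $(0,1)$ or by a Taylor expansion with an estimate on the remainder. Since this is a textbook result, I would in fact simply cite \cite{MitzenmacherUpfal17}*{Corollary 4.6} as the paper already does, and omit the proof; the sketch above is what one would write if a self-contained argument were desired.
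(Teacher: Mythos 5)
Your proposal is correct: the paper itself gives no proof of Proposition~\ref{prop: Chernoff}, simply citing \cite{MitzenmacherUpfal17}*{Corollary 4.6}, which is exactly what you conclude by doing, and the exponential-moment argument you sketch (upper and lower tails via Markov applied to $e^{tY}$, optimized at $t=\log(1+\eta)$ and $t=-\log(1-\eta)$, then the elementary inequalities $(1+\eta)\log(1+\eta)\ge\eta+\eta^2/3$ and $(1-\eta)\log(1-\eta)\ge-\eta+\eta^2/2$, followed by a union bound) is precisely the standard proof underlying that citation. No gaps; the only cosmetic point is that the stated proposition assumes identically distributed $Y_i$, so your slightly more general treatment with varying $p_i$ is harmless.
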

\begin{proof}[Proof of Lemma \ref{lem:etaFrand}]
Let $X_i\stackrel{i.i.d}{\sim} \Unif(\F_p^n)$ for $i=1,\ldots,m$, and $S=\{X_1,\ldots,X_m\}$. For any $x\in\F_p^n$ let $Y_{i,x}$ be the indicator of the event that $x+X_i\in A$. We clearly have that the random variables $\{Y_{i,x}\}_{i=1}^m$ are i.i.d.\ Bernoulli with $\Pr(Y_{i,x}=1)=\frac{|A|}{p^n}$. Thus, $Y_x=\sum_{i=1}^m Y_{i,x}$  satisfies the conditions of Proposition~\ref{prop: Chernoff}, and applying it with $\mu=\mathbb{E}(Y_x)=\frac{m|A|}{p^n}$ and $\eta=\tau$, gives that if $\frac{m|A|}{p^n}>\frac{3}{\tau^2}\left(n\log p-\log\frac{\delta}{2}\right)$ then
\begin{align}
 \Pr&\left(\left|\frac{|(x+S)\cap A|}{m\cdot|A|p^{-n}}-1 \right|\geq \tau \right)\nonumber\\
 &=\Pr(|Y_x-\mu|\geq \tau\mu)\leq 2 e^{-\frac{\tau^2}{3}\frac{3}{\tau^2}\left(n\log p-\log\frac{\delta}{2}\right)}=\delta\cdot p^{-n}.\nonumber
\end{align}
Applying the union bound, this implies that
\begin{align}
 \Pr\left(\max_{x\in\F_p^n}\left|\frac{|(x+S)\cap A|}{m\cdot|A|p^{-n}}-1 \right| \geq \tau \right)\leq \delta.   \nonumber
\end{align}
Finally, noting that 
\begin{align*}
\Pr(|S|\neq m)
\le
\sum_{1 \le i < j \le m} \Pr(X_i=X_j)
=
\binom{m}{2} p^{-n}
< 
m^2 p^{-n},
\end{align*}
and applying the union bound again, we obtain the claimed result.
\end{proof}

\begin{proof}[Proof of Theorem \ref{thm: non lattice}]
Let $\KK\in\Convn$, $\frac{320n}{\tau}<p<\frac{640n}{\tau} $ be a prime number for some $\tau\in(0,1)$ to be chosen later, and $c=40$. Let $L$ be a lattice so that $(L,(1+\frac{c}{p})\KK)$ is a packing and $(L, c\KK)$ is a covering. Such a lattice exists by Proposition~\ref{prop: 3.6}. Note further, that for such a lattice we have that $\frac{\covol(L)}{\Vol(\KK)}<c^n=40^n$. Denote $\rho=\frac{c}{p}<\frac{\tau}{8n}$, such that in particular $0<\rho<\frac{1}{2n}$. We follow the derivations in the proof of Theorem~\ref{thm:packtocov} up to equation~\eqref{eq:etaEc}, where instead of assuming $S\in \Gr_{n,r}(\F_p)$, we assume $S\subset\F_p^n$ is an arbitrary subset of $m$ points in $\F_p^n$. This derivation does not rely on $S$ being a subspace and therefore holds verbatim, where the only difference is that we replace the definitions of the sets $E_0$ and $E_1$ from~\eqref{eq:E0def} and \eqref{eq:E1def} with
\begin{align*}
E_0&\df\left\{S\in \mathcal{R}_{m,n}(\F_p) \ : \   \SMfp(A_0,S)>\tau  \right\},\\
E_1&\df\left\{S\in \mathcal{R}_{m,n}(\F_p) \ : \   \SMfp(A_1,S)>\tau  \right\},
\end{align*}
where $\mathcal{R}_{m,n}(\F_p)=\{S\subset \F_p^n~:~|S|=m\}$. We therefore have that 
\begin{align}
\SM(\KK,L(S))\leq \tau+8\rho n\leq 2\tau,~~ \forall S\in E^c.
\label{eq:etaEcInfinite}
\end{align}
We proceed to upper bound $\Pr(S\in E)$ for the case where $S$ consists of $m$ points drawn i.i.d.\ from the uniform distribution over $\F_p^n$.
By~\eqref{eq:sizeAbounds}, we have that for $i=0,1$
\begin{align}
\frac{|A_i|}{p^n}\geq (1-2\rho)^n\frac{\Vol(\KK)}{\covol(L)}\geq (1-2n\rho)\frac{\Vol(\KK)}{\covol(L)}\geq\left(1-\frac{\tau}{4}\right)\frac{\Vol(\KK)}{\covol(L)},
\end{align}
where the second inequality is due to~\eqref{eq:1mrhoLB}, and the third follows since $\rho<\frac{\tau}{8n}$. Thus, by Lemma~\ref{lem:etaFrand}, for any $\delta\in(0,1)$, if
\begin{align}
\left(1-\frac{\tau}{4}\right)\cdot \Vol(\KK)\cdot\frac{m}{\covol(L)}>\frac{3}{\tau^2}\left(n\log p-\log\frac{\delta}{2}\right)
\end{align}
then $\Pr(E_i)<\delta+m^2 p^{-n}$ for $i=1,2$. We take $\tau=\eps/2$ and $\delta=2e^{-2}$ and choose 
\begin{align*}
m&=\left\lceil \frac{\covol(L)}{\Vol(\KK)}\frac{1}{\left(1-\frac{\eps}{8}\right)}    \frac{12}{\eps^2}\left(n\log p+2\right) \right\rceil \\
&< \frac{\covol(L)}{\Vol(\KK)} \frac{14}{\eps^2}\left(n\log p+2\right)
\end{align*}
to be the smallest integer satisfying the above constraint, so that $\Pr(E)\leq 2\delta+2{m}^2p^{-n}=4 e^{-2}+2{m}^2p^{-n}$. Recalling that $\frac{\covol(L)}{\Vol(\KK)}< 40^n$ and that $\frac{640n}{\eps}\leq p\leq\frac{1280 n}{\eps}$ we see that
\begin{align*}
m^2 p^{-n}&\leq 40^{2n} \frac{14^2}{\eps^4}\left(n\log \frac{2560n}{\eps}\right)^2 \left(\frac{640 n}{\eps}\right)^{-n}\\
&<40^{2n} \frac{14^2}{\eps^4}\left(\frac{2560n^2}{\eps}\right)^2 \left(\frac{640 n}{\eps}\right)^{-n}\\
&<\left(\frac{40^2}{640}\right)^n \left(\frac{n}{\eps}\right)^{-(n-6)}(14\cdot 2560)^2
\end{align*}
and this is smaller than $0.08$ for all $n\geq 20$, and so $\Pr(E)\leq 4 e^{-2}+2m^2p^{-n}<1$ for all $n\geq 20$. We therefore see that there exists a discrete set $L(S)$ with asymptotic density $D(L(S))=\frac{m}{\covol(L)}$ such that 
\begin{align}
\Vol(\KK) D(L(S))<\frac{14}{\eps^2}\left(n\log p+2\right)
\end{align}
and $\eta(\KK,L(S))<\eps$. Recalling that $p<\frac{1280n}{\eps}$, we obtain the claimed result.
\end{proof}

\bibliographystyle{alpha}
\bibliography{elonbib}

\end{document}